\definecolor{refkey}{gray}{0.8}
\definecolor{labelkey}{gray}{0.8}
\newcommand{\cut}[1]{{}}
\newcommand{\cC}{{\mathcal{C}}}
\newcommand{\cG}{{\mathcal{G}}}
\newcommand{\cH}{{\mathcal{H}}}
\newcommand{\RR}{\mathbb{R}}
\newcommand{\Proj}{{\mathrm{Proj}}}
\newcommand{\prox}{\mathbf{prox}}
\DeclareMathOperator*{\argmin}{arg\,min}
\DeclareMathOperator*{\Min}{minimize}
\DeclareMathOperator*{\zer}{zer}
\DeclarePairedDelimiter{\dotp}{\langle}{\rangle}
\newcommand{\bc}{\begin{center}}
\newcommand{\ec}{\end{center}}
\newcommand{\bdm}{\begin{displaymath}}
\newcommand{\edm}{\end{displaymath}}
\newcommand{\beq}{\begin{equation}}
\newcommand{\eeq}{\end{equation}}
\newcommand{\bfl}{\begin{flushleft}}
\newcommand{\efl}{\end{flushleft}}
\newcommand{\bt}{\begin{tabbing}}
\newcommand{\et}{\end{tabbing}}
\newcommand{\beqn}{\begin{align}}
\newcommand{\eeqn}{\end{align}}
\newcommand{\beqs}{\begin{align*}} 
\newcommand{\eeqs}{\end{align*}}  
\newtheorem{assumption}{Assumption}
\newtheorem{remark}{Remark}
\newcommand{\xin}{y}
\begin{document}

\title{Cyclic Coordinate Update Algorithms for Fixed-Point Problems: Analysis and Applications}
\author{
  Yat Tin Chow\footnotemark[1]
  \and Tianyu Wu\footnotemark[1]
  \and Wotao Yin\footnotemark[1]
}
\footnotetext[1]{
  Department of Mathematics, University of California, Los Angeles, CA 90095, USA, \url{ytchow / wuty11 / wotaoyin@math.ucla.edu}. This research is supported by the NSF grant ECCS-1462397.}
\date{\today}
\maketitle

\begin{abstract}
Many problems reduce to the fixed-point problem of solving $x=T(x)$, {where $T$ is a mapping from a Hilbert space to itself.} To this problem, we apply the coordinate-update algorithms, which update only one or a few components of $x$ at each step. When {each step} is cheap, these algorithms are faster than the full fixed-point iteration (which updates all the components).

In this paper, we focus on {cyclic coordinate} selection rules, where the ordering of coordinates in each cycle is arbitrary. {The corresponding} algorithms are fast, but their convergence is unknown in the fixed-point setting.

When $T$ is a nonexpansive operator and has a fixed point, we show that the sequence of coordinate-update iterates converges to a fixed point under proper step sizes. This result applies to the primal-dual coordinate-update algorithms, which have {wide} applications to optimization problems with nonseparable nonsmooth objectives, as well as global linear constraints.

Numerically, we apply coordinate-update algorithms with cyclic, shuffled cyclic, and random selection rules to $\ell_1$ robust least squares, {total variation minimization}, as well as nonnegative matrix factorization. They converge much faster than the standard fixed-point iteration. Among the three rules, cyclic and shuffled cyclic rules are overall faster than the random rule.
\end{abstract}

\begin{keywords}
  coordinate update, cyclic, shuffled cyclic, fixed point, nonexpansive operator, robust least squares, image reconstruction, nonnegative matrix factorization
\end{keywords}

\begin{AMS}
  90C06, 90C25, 65K05
\end{AMS}


\section{Introduction}\label{sec:intro}
We recently witnessed a strong demand for fast numerical solutions for large-scale problems. Numerical methods of small memory footprints become very popular. Among them \emph{coordinate update algorithms} (e.g.,  \cite{bertsekas1983distributed,bertsekas1989parallel,luo1992convergence,tseng2001convergence,nesterov2012rcd,razaviyayn2013unified,xu2013block,combettes2015stochastic,Peng_2015_AROCK,
peng2016coordinate,chen2015extended}) are found to be very useful. They reformulate a problem as a fixed-point problem and decompose it further into simple subproblems, each updating one, or a small block of, variables while fixing others.
They are popular numerical choices {for} problems with the \emph{coordinate-friendly structure}, namely, there are means to update a component, or a small block of components, of its variable much cheaper  than updating all components of the variable. A variety of problems, including second order cone programming, variational image processing, support vector machines, empirical risk minimization, portfolio optimization, distributed computing, and nonnegative matrix factorization, have this structure \cite{peng2016coordinate}. 

A class of coordinate update algorithms is the \emph{coordinate descent algorithms} for optimization, where the objective function, or {a surrogate of the objective function,} is reduced at each iteration; e.g., see \cite{beck2013convergence,razaviyayn2013unified}. In randomized coordinate descent algorithms such as \cite{nesterov2012rcd,Lu_Xiao_rbcd_2015,richtarik2014iteration}, it is the (conditional) expectation of the objective function that descends iteratively.
Coordinate descent algorithms are efficient at solving problems with Lipschitz differentiable objective functions, as well as those with \emph{separable} non-differentiable functions and constraints.  See the recent survey papers \cite{wright2015coordinate,shi2016primer,yuan2010comparison}.

On the other hand, there exist non-separable, non-differentiable examples \cite{warga1963minimizing} \cite[Section 2.2.1]{shi2016primer} which coordinate descent algorithms {fail to solve because they} get stuck at a non-stationary point. Also, it is difficult for coordinate descent algorithms to directly handle problems with global constraints.
Such examples are found in problems such as $\ell_1$-robust least squares, total variation image processing, and the extended monotropic program. They can be solved by the recent primal-dual coordinate-update algorithms \cite{pesquet2014class,combettes2015stochastic,fercoq2015coordinate}~\cite[Section 4]{peng2016coordinate}. Primal-dual coordinate-update algorithms {do not always reduce the objective values monotonically,} so it is challenging to analyze their convergence in the coordinate descent framework. However, {because they are fixed-point algorithms, they can be analyzed by the results of this paper.} 
In fact, {applying cyclic coordinate updates to primal-dual algorithms is} new to the best of our knowledge.

In addition to optimization problems, fixed-point iterations arise in variational inequalities~\cite{xia2011projection}, inverse problems~\cite{hanke1995convergence}, equilibrium analysis~\cite{iusem2010proximal,da2010cone} and control theory~\cite{nong2009parameter}, where the fixed-point operators are nonexpansive. 

This paper focuses on the fixed-point problem with a nonexpansive operator. Despite the rich literature on the fixed-point problem, our understanding to  its coordinate-update algorithms is very limited. In the literature, there are two classes of convergence analysis. They use different metrics {to measure} the distance between the current iterate and the fixed point. The first class uses the (weighted) $\ell_\infty$ distance \cite{bertsekas1983distributed,bertsekas1989parallel}. Their applications include both linear and nonlinear systems with dominant diagonals, as well as certain optimization
problems with a smooth objective and simple constraints. The second class uses the $\ell_2$ distance and has more applications. However, convergence results are limited to the random selection of coordinates~\cite{combettes2015stochastic,Peng_2015_AROCK}\cite[Appendix D]{peng2016coordinate}. 
{On the other hand, the cyclic selection typically shows better practical performance, but its convergence has not been studied under the $\ell_2$ distance yet.}

Let us briefly compare the random and cyclic selection rules. The random selection is easier to analyze since taking expectation reduces it to the analysis of the standard (full update) fixed-point iteration. However, cyclic selection took fewer iterations to converge than random selection in all of our numerical tests (presented in Section~\ref{sec:numerical} below). The advantages of cyclic selection {have} also been observed with coordinate descent algorithms~\cite{friedman2007pathwise,friedman2010regularization}. Cyclic selection is also more cache efficient{, because data} of adjacent coordinates are typically stored at consecutive memory locations.
Cyclic selection accesses these data sequentially, likely encountering \emph{cache hits}. Random selection, on the contrary, accesses data randomly, likely encountering \emph{cache misses}. Hence, random selection is less cache efficient. Moreover, random selection also requires pseudo-random number generation, which can  take more time than the coordinate updates if the underlying problem is coordinate-friendly with very cheap updates.

Our exposition in this paper was motivated by the observations that cyclic coordinate update algorithms have excellent numerical performance for many fixed-point problems yet their convergence has not been understood yet.

\subsection{Problem formulation}
Given an operator $S:\cH\to\cH$, our problem is to find $x\in\cH$ such that
\begin{align}\label{eq:prob}
  Sx=0.
\end{align}
{(We often use $Sx$ to abbreviate $S(x)$.)} This problem is equivalent to finding a fixed point to the operator $T=I-S$, that is, the point satisfies $x=Tx$. (In the rest of this paper we choose to use $S$ instead of $T$ because it is overall more convenient to present our results in $S$.) Our assumption to this problem is that the operator $I-S$ has a fixed point (possibly not unique) and it is nonexpansive, that is,
\begin{align}\label{eq:nonexp}
  \|(I-S)x - (I-S)y\|\le \|x-y\|,\quad\forall x,y\in \cH.
\end{align}
The condition  \eqref{eq:nonexp} is equivalent to $S$ being  $(1/2)$-cocoercive, namely,
\begin{align}\label{eq:cocoer}
  \frac{1}{2}\|Sx - Sy\|^2\le \dotp{Sx-Sy,x-y},\quad\forall x,y\in \cH.
\end{align}
{In the above and throughout the paper, $\|\cdot\|=\sqrt{\langle\cdot,\cdot\rangle}$ is the norm induced by the inner product of the Hilbert space $\cH$.}
\subsection{Contributions}
This paper proposes the standard and shuffled cyclic coordinate update algorithms to solve \eqref{eq:prob} {in} a Hilbert space $\cH$. The contributions of this paper include:
\begin{enumerate}
\item {Assume that} the operator $I-S$ is nonexpansive and has a fixed point. We show that cyclic coordinate-update algorithms converge to a fixed point {under proper step sizes}. This result holds for all orderings of the coordinates as long as {every coordinate is updated in each} cycle. For example, one can shuffle the coordinates or  rank the coordinates based on their coordinate-wise Lipschitz constants.

\smallskip

We show in Theorem~\ref{thm:decreasingalpha2} that a sequence of $O(\tfrac{1}{\sqrt{k}})$ step sizes ensures convergence.  If  the operator $S$ is further \emph{quasi-$\mu$-strongly monotone} (this property is defined Assumption~\ref{assump:qstrong} below and is weaker than the $\mu$-strongly monotone property),
then a fixed step size is sufficient for convergence (Theorem~\ref{thm:constantalpha}). 
{Although fixing the step size to 1 worked throughout our tested problems, the theoretical step size is inversely proportional to the number of coordinates.}

\smallskip

Our proofs are based on comparing the operator of cyclic coordinate update to the standard full update operator and bounding {their differences.} This approach is significantly different from those for cyclic coordinate descent~\cite{beck2013convergence,razaviyayn2013unified} (based on function value descent) and fixed-point random coordinate update~\cite{combettes2015stochastic,Peng_2015_AROCK} (taking expectations and using super-martingale convergence).

\item  Based on the  algorithms in part 1, we propose specific coordinate-update algorithms for two problems: $\ell_1$-robust least squares and total-variation based computer tomograph (CT) reconstruction. We also test an existing algorithm for nonnegative matrix factorization. We briefly explain how to apply coordinate updates by exploiting their coordinate-friendly structures. 

\smallskip

For all three problems, we find that cyclic coordinate updates, either deterministic or shuffled, take fewer epochs\footnote{An epoch consists of $m$ coordinate updates, where $m$ is the total number of coordinates.} to reach the same accuracy than randomized coordinate update, and {every kind of} coordinate {update takes} fewer epochs than (full update) fixed-point iteration.

\smallskip

Our numerical results are obtained with the optimal fixed step size parameters for all algorithms. We conjecture that fixed step sizes sufficiently ensure the convergence of our algorithms when the fixed-point problem is derived from an optimization problem {using} the primal-dual method.
\end{enumerate}

\subsection{Algorithm}
In this paper our variable has $m$ blocks,  $x_i\in \cH_i$, $i=1,\ldots,m$, where $\cH_i$ is a Hilbert space. Their Cartesian product is
$  \cH:=\cH_1\times\cdots\times\cH_m.$

For each coordinate $i=1,\ldots,m$, we define the $i$th coordinate operator $S_i:\cH\to\cH$ such that
\begin{align}\label{eq:Si}
  S_ix=\Big(0,\ldots,(Sx)_i,\ldots,0\Big),\quad\forall x\in\cH.
\end{align}
We establish the convergence of Algorithm \ref{alg:cyccu} and numerically demonstrate its efficiency.
\begin{algorithm}[h!]
  \caption{Cyclic Coordinate Update\label{alg:cyccu}}
  \SetKwInOut{Input}{input}
  \Input{$x^0\in\cH$}
  \For{$k=1,2,\ldots,$}{
    set $(i_1,i_2,...,i_m)$ as a permutation of $(1,2,...,m)$\;
    (either no permutation, random shuffling, or greedy ordering)\;
    {choose a step size $\alpha_k>0$\;}
    initialize $\xin^0\gets x^{k-1}$\;
    \For{$j=1,...,m$}{
      set $\xin^{j} \gets \xin^{j-1} - \alpha_k S_{i_j}(\xin^{j-1})$\;
    }
    set $x^k \gets \xin^m$\;
  }
\end{algorithm}

In Algorithm \ref{alg:cyccu}, each $k$ specifies an outer loop, which is also called an epoch, and each $j$ is an inner loop. Line 5 sets an initial point for the inner loop, then  Line 6--7 update each of the $m$ coordinates once, and finally Line 8 finishes the inner loop by {passing its result to} $x^k$. 
Note that each inner iteration only updates the $i_j$th coordinate:  
\begin{align}
  (\xin^j)_i=
  \begin{cases}
  (\xin^{j-1})_i-\alpha_k (S(\xin^{j-1}))_i, & \mbox{if } i=i_j  \\
  (\xin^{j-1})_i,                            & \mbox{otherwise,}
  \end{cases}\label{eqn:update}
  \quad \mbox{for }i=1,\ldots,m.
\end{align}

The order of the $m$ coordinates is specified at the beginning of each epoch. {Each epoch selects the order independently.}
Typical choices include the {natural} ordering $(1,2,...,m)$, a random shuffling, and a greedy ordering (for example, place a coordinate earlier if the Lipschitz constant of the coordinate is larger).
We can also \emph{shuffle only in the first epoch} and then {use the same} ordering for all remaining epochs. All these ordering rules are numerically efficient, and our analysis applies to all of them.

The non-shuffled cyclic ordering is the easiest to code 
and has the cheapest per-iteration cost because shuffling requires pseudo-random number generation, which can be time consuming, especially when each inner iteration is simple. Shuffling, however, avoids the worst ordering and {may} accelerate the convergence.

\subsection{Organization}
The rest of this paper is organized as follows. Section~\ref{sec:bac} briefly reviews fixed-point iteration of nonexpansive operators and its applications. Section~\ref{sec:analysis} presents our theoretical analysis. Numerical results and further applications are presented in Section~\ref{sec:numerical}. Finally, Section~\ref{sec:con} concludes this paper.

\section{Background}\label{sec:bac}
%
%
A traditional algorithm for Problem~\eqref{eq:prob} is the Krasnosel'ski\u{i}-Mann (KM) iteration~\cite{krasnosel1955two,mann1953mean}:
\begin{align}\label{KM}
  x^{k+1}=x^k-\eta_k Sx^k,
\end{align}
where $\eta_k$ is a step size parameter, {and the operator $S$ is applied to $x^k$.}

KM iteration~\eqref{KM} has many special cases such as gradient descent, proximal-point, prox-gradient, as well as many operator-splitting algorithms including forward-backward splitting, Peaceman-Rachford splitting, Douglas-Rachford splitting \cite{LionsMercier1979_SplittingAlgorithms}, the alternating direction of multipliers (ADMM) \cite{GlowinskiMarroco1975_LapproximationPar,GabayMercier1976_dual}, three-operator splitting \cite{DavisYin2015_threeoperator}, primal-dual splitting \cite{condat2013primal,vu2013splitting}. Many of their variations are special examples of KM iteration, too. For each of these algorithms, one can recover an operator $S$ such that $(I-S)$ is nonexpansive though this recovery is not {always} obvious.
\subsection{Notation and preliminaries}
The solution to our problem \eqref{eq:prob} is the zero set of $S$:
$$\zer(S):=\{x\in\cH: 0=Sx\}.$$
The minimal assumption that we make to the problem is{:}
\begin{assumption}\label{assump:S}
  $\zer(S)$ is nonempty. $(I-S)$ is nonexpansive. 
\end{assumption}

We frequently use the following conventions and properties on operators: for any operators $A,B,C:\cH\to\cH$ and point $x\in \cH$, we have
\begin{align}
ABCx & = A(B(Cx)), \\
(A+B+C)x & = Ax+Bx+Cx,\\
(A+B)Cx & =A Cx + BCx. 
\end{align}
However, unless $A$ is a linear operator, we do not have $A(B+C)x = ABx + ACx$.

As already mentioned, $(I-S)$ is nonexpansive \eqref{eq:nonexp} if and only if $S$ is $(1/2)$-cocoercive \eqref{eq:cocoer}, {which} 
implies that $S$ is 2-Lipschitz:
\begin{align}
  \|Sx-Sy\|\leq2\|x-y\|,\quad\forall x,y\in\cH.\label{lipschitz}
\end{align}
Since $\|S_ix-S_iy\|\le \|Sx-Sy\|$, each $S_i$ is 2-Lipschitz, too. However, the Lipschitz constants for $S_i$ can be (much) smaller than $2$, permitting theoretically larger step sizes.
Hence, \textbf{we let $L_i$ be the Lipschitz constant of $S_i$,} i.e.,
\begin{align}
  \|S_ix-S_iy\|\leq L_i\|x-y\|,\quad\forall x,y\in\cH,
\end{align}
and, for simplicity, let \begin{align} L:=\max_iL_i\le 2.\label{def:L}\end{align}
Properly replacing $L$ by different $L_i$ in our analysis  below {will} improve our results, but we prefer simplicity over better bounds.

Next, we switch our focus to KM iteration and Algorithm \ref{alg:cyccu}. Without loss of generality, we fix the coordinate updating order as $1$ through $m$.
\begin{definition}[the full and cyclic coordinate update operators]
$ $

  For $0<\alpha<1$, we define the full update operator and cyclic coordinate update operator, respectively, as
  \begin{align}
    \label{eq:Talpha}
    T^\alpha & :=I-\alpha S,                                           \\
    \label{eq:Ealpha}
    E^\alpha & :=(I-\alpha S_m)(I-\alpha S_{m-1})\cdots(I-\alpha S_1).
  \end{align}
\end{definition}
 When $(I-S)$ is nonexpansive, the operator $T^\alpha$ has the following properties~\cite[Prop. 4.25]{BauschkeCombettes2011}:
\begin{align}\label{eq:contra}
  \|T^\alpha x-T^\alpha y\|^2\leq\|x-y\|^2-\alpha (1-\alpha) \|Sx-Sy\|^2,\quad \forall x,y\in\cH.
\end{align}
Such an operator is called an \emph{averaged operator} since {we can write it as} $T^\alpha =(1-\alpha) I+\alpha (I-S)$.

For any $x^*\in\zer(S)$, substituting $y=x^*$ in \eqref{eq:contra} and noticing $T^\alpha x^* = x^*-\alpha Sx^*= x^*$ yield the quasi\footnote{The modifier \emph{quasi} is used if the property involves the solution $x^*$.}-contractive property:
\begin{align}\label{eq:qcontra}
  \|T^\alpha x-x^*\|^2\leq\|x-x^*\|^2- \alpha(1-\alpha)\|Sx\|^2,\quad \forall x\in\cH,
\end{align}
which is a key property for the convergence of the KM iteration $x^{k+1}= T^\alpha x^k$. 

The operator $E^\alpha$ characterizes one epoch of Algorithm \ref{alg:cyccu} under the cyclic ordering $1,2,\ldots,m$. Indeed, the iterates $x^k$ of Algorithm \ref{alg:cyccu}  satisfy
\begin{align}\label{eq:Eitr}
  x^{k+1} = E^{\alpha_k} (x^k).
\end{align}
The following proposition follows directly from~\eqref{lipschitz} \eqref{def:L} and~\eqref{eq:Ealpha}.
\begin{proposition}\label{prop:lip}
The operator $E^{\alpha_k}$ is $(1+\alpha_kL)^m$-Lipschitz.
\end{proposition}

We will set $\alpha_k$ so that an inequality similar to \eqref{eq:qcontra}  holds for $E^{\alpha_k}$ and {that} $x^{k}$ (weakly) converges to a point in $\zer(S)$.

\section{Convergence results}\label{sec:analysis}
Our analysis is based on comparing the operator $E^\alpha$ with the operator $T^\alpha$. To simplify notation, let us define the operator
\begin{align}
  R:=\frac{1}{\alpha}(T^{\alpha}-E^{\alpha}).\label{def:R}
\end{align}
By \eqref{eq:Si}, we have the decomposition $S=S_1+S_2+\cdots+S_m=\sum_{i=1}^mS_i$, which  yields
  \begin{align}
    R & =\frac{1}{\alpha}(T^{\alpha}-E^{\alpha})= \frac{1}{\alpha}\big((I-\alpha S) -  (I-\alpha S_m)(I-\alpha S_{m-1})\cdots(I-\alpha S_1)\big) \\
      & = \frac{1}{\alpha}\Big((I-\alpha \sum_{i=1}^mS_i)-\big((I-\alpha S_{m-1})\cdots(I-\alpha S_1)-\alpha S_m (I-\alpha S_{m-1})\cdots(I-\alpha S_1) \big)\Big)\\
    & = \Big( S_m (I-\alpha S_{m-1})\cdots(I-\alpha S_1)- S_m\Big) + \frac{1}{\alpha}\Big((I-\alpha \sum_{i=1}^{m-1}S_i)-(I-\alpha S_{m-1})\cdots(I-\alpha S_1)\Big)\\
    & =  \sum_{i=2}^m \Big(S_i (I-\alpha S_{i-1})(I-\alpha S_{i-2})\cdots(I-\alpha S_1) -S_i\Big) \,,
  \end{align}
  and thus
  \begin{align}
    \|R  x\|^2 & =\sum_{i=2}^m\big\| S_i x -S_i (I-\alpha S_{i-1})(I-\alpha S_{i-2})\cdots(I-\alpha S_1)  x\big\|^2 \,.
  \end{align}
  In addition, we can obtain the following estimate for $R$.

  \begin{lemma} \label{lemma:estR}
    The operator $R$ satisfies the estimate
    \begin{equation}
    \|R x\|\leq \tfrac{\alpha L m}{\sqrt{2} }(1+\alpha L)^{m}\|Sx\|.\label{eq:Rlip}
    \end{equation}
  \end{lemma}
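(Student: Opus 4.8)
The plan is to start from the identity already derived just above the statement,
\[
\|Rx\|^2 = \sum_{i=2}^m \big\| S_i x - S_i (I-\alpha S_{i-1})(I-\alpha S_{i-2})\cdots(I-\alpha S_1)\,x \big\|^2 ,
\]
and bound each summand separately. Writing $P_i := (I-\alpha S_{i-1})\cdots(I-\alpha S_1)$ (with $P_2 = I-\alpha S_1$), the first step is to use the $L_i$-Lipschitz continuity of $S_i$ together with $L_i\le L$ to get $\|S_i x - S_i P_i x\|\le L\,\|x-P_i x\|$. This reduces the problem to controlling the displacement $\|x-P_i x\|$ produced by running the first $i-1$ inner steps.

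For the displacement I would introduce the partial iterates $u_0:=x$ and $u_j:=(I-\alpha S_j)u_{j-1}$, so that $P_i x=u_{i-1}$ and each step satisfies $u_j-u_{j-1}=-\alpha S_j u_{j-1}$. Telescoping then gives $x-P_i x=\alpha\sum_{j=1}^{i-1}S_j u_{j-1}$. The key structural observation is that, by the definition \eqref{eq:Si}, each $S_j u_{j-1}$ is supported only on the $j$-th block of $\cH$, so these vectors are mutually orthogonal; the Pythagorean identity then yields $\|x-P_i x\|^2=\alpha^2\sum_{j=1}^{i-1}\|S_j u_{j-1}\|^2$.

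The crux is to estimate each $a_j:=\|S_j u_{j-1}\|$ in terms of $\|Sx\|$. Here I would split $a_j\le \|S_j x\|+\|S_j u_{j-1}-S_j x\|\le \|S_j x\|+L\,\|u_{j-1}-x\|$ and use the telescoping displacement bound $\|u_{j-1}-x\|\le \alpha\sum_{l=1}^{j-1}a_l$, which gives the recursion $a_j\le \|S_j x\|+\alpha L\sum_{l=1}^{j-1}a_l$. Since $\|S_j x\|\le \|Sx\|$, an induction on the partial sums $\sigma_j:=\sum_{l\le j}a_l$ produces $\sigma_j\le \frac{(1+\alpha L)^j-1}{\alpha L}\,\|Sx\|$, and substituting this back makes the $-1$ cancel cleanly to leave the sharp per-term estimate $a_j\le (1+\alpha L)^{j-1}\|Sx\|$. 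I expect this recursion-and-induction step to be the main obstacle: one must turn a $\|x-x^*\|$-type Lipschitz growth into an $\|Sx\|$-type bound \emph{without any coercivity}, and the device that makes this possible is peeling off $\|S_j x\|$ at every stage rather than comparing $u_{j-1}$ to a fixed point (which would only give a bound in $\|x-x^*\|$).

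Finally I would assemble the pieces: from $\|x-P_i x\|^2=\alpha^2\sum_{j<i}a_j^2\le \alpha^2(i-1)(1+\alpha L)^{2(i-1)}\|Sx\|^2$ and summing over $i$ with $\sum_{i=2}^m(i-1)=\tfrac{m(m-1)}{2}\le\tfrac{m^2}{2}$ and $(1+\alpha L)^{2(i-1)}\le(1+\alpha L)^{2m}$, one obtains $\|Rx\|^2\le L^2\cdot\tfrac{\alpha^2 m^2}{2}(1+\alpha L)^{2m}\|Sx\|^2$, which is exactly \eqref{eq:Rlip} after taking square roots. The only subtlety in this last step is keeping the two different uses of the vector structure straight — orthogonality (Pythagoras) to pass from the telescoped sum to $\sum_j a_j^2$, but the ordinary triangle inequality inside the recursion for $a_j$.
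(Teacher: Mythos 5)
Your proof is correct and delivers exactly the bound \eqref{eq:Rlip}; it shares the paper's skeleton but differs genuinely in the key middle step. Both arguments start from $\|Rx\|^2=\sum_{i=2}^m\Delta_i^2$, reduce $\Delta_i=\|S_ix-S_iP_ix\|\le L\|x-P_ix\|$, and then must control the displacement $\|x-P_ix\|$ in terms of $\|Sx\|$. The paper does this by telescoping through the \emph{hybrid} points $x,\ (I-\alpha S_{i-1})x,\ (I-\alpha S_{i-1})(I-\alpha S_{i-2})x,\ \dots$ (building the composition outside-in) and pulling the Lipschitz constant of each composed prefix operator (Proposition~\ref{prop:lip}) out of each difference, which yields $\Delta_i\le \alpha L\sum_{j=1}^{i-1}(1+\alpha L)^{i-1-j}\|S_jx\|$ and then the factor $\sqrt{i-1}\,\|Sx\|$ by Cauchy--Schwarz combined with the block identity $\sum_j\|S_jx\|^2=\|Sx\|^2$. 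You telescope instead along the \emph{actual} inner trajectory $u_j$ (inside-out), exploit that the increments $\alpha S_j u_{j-1}$ live on disjoint blocks to get the Pythagorean identity for $\|x-P_ix\|^2$, and recover the geometric factor by closing the discrete Gr\"onwall recursion $a_j\le\|S_jx\|+\alpha L\sum_{l<j}a_l$ by induction, giving $a_j\le(1+\alpha L)^{j-1}\|Sx\|$ (your cancellation of the $-1$ is checked and correct). Quantitatively the two routes are equivalent: after reindexing $j\mapsto i-j$, your sum $\sum_{j<i}(1+\alpha L)^{2(j-1)}$ coincides with the sum produced by the paper's Cauchy--Schwarz step, and both are then relaxed to $(i-1)(1+\alpha L)^{2m}$ before summing over $i$ with $\sum_{i=2}^m(i-1)\le m^2/2$. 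What your version buys: it tracks the algorithm's own iterates rather than artificial hybrid points, replaces the composed-operator Lipschitz bookkeeping with a standard Gr\"onwall induction, and uses an exact Pythagorean identity where the paper uses the triangle inequality. What it gives up: the recursion coarsens each per-block residual $\|S_jx\|$ to the full $\|Sx\|$ one step earlier than the paper does --- harmless here, since the paper discards the same information immediately afterward via Cauchy--Schwarz, so neither route yields a sharper final constant.
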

  \begin{proof}
    Let us consider for each $i$,
    \begin{align}
           \Delta_i:=&\| S_i x -S_i (I-\alpha S_{i-1})(I-\alpha S_{i-2})\cdots(I-\alpha S_1)  x \|                                          \\
      \leq & L\|x - (I-\alpha S_{i-1})(I-\alpha S_{i-2})\cdots(I-\alpha S_1)  x\|          .
      \end{align}
      The triangle inequality yields
      \begin{align}
      \Delta_i\leq & L\|x-(I-\alpha S_{i-1})x\|+L\|(I-\alpha S_{i-1})x-(I-\alpha S_{i-1})(I-\alpha S_{i-2})x\|+\cdots                      \\
           & +L\|(I-\alpha S_{i-1})\cdots(I-\alpha S_{2})x-(I-\alpha S_{i-1})\cdots(I-\alpha S_{1})x\|     .
           \end{align}
           Applying Proposition~\ref{prop:lip}, we obtain
          \begin{align}
       \Delta_i\leq & L  \|x-(I-\alpha S_{i-1})x\|+ L(1+ \alpha L) \|  x-(I-\alpha S_{i-2})x\|                                                         \\
           & +  L  \left(1+\alpha L \right)^2 \| x-(I-\alpha S_{i-3})x\| + \dots + L \left(1+\alpha L \right)^{i-2} \| x-(I-\alpha S_{1})x\| \\
      \leq & \alpha L  \|S_{i-1} x\|+ \alpha L(1+ \alpha L) \|  S_{i-2}x\|                                                         \\
           & + \alpha L  \left(1+\alpha L \right)^2 \| S_{i-3} x\| + \dots + \alpha L \left(1+\alpha L \right)^{i-2} \|  S_{1} x\| \\
             \leq & \alpha L(1+ \alpha L)^{m}\sum_{j=1}^{i-1}\|S_jx\|
        \end{align}
        By the Cauchy-Schwarz inequality,        \begin{align}
     \Delta_i \leq & \alpha L(1+ \alpha L)^m\sqrt{i-1}\|Sx\|.
    \end{align}
    Finally, combining the above inequalities yields
    \begin{align}
      \|R x\|^2 & =\sum_{i=2}^m \Delta_i^2 \leq\sum_{i=2}^m  (i-1) \alpha^2 L^2 (1+ \alpha L)^{2m}   \|Sx\|^2 \leq \frac{\alpha^2L^2m^2}{2}(1+\alpha L)^{2m}\|Sx\|^2.
    \end{align}
\hfill     \end{proof}

Pick an arbitrary $x^*\in\zer(S)$. To use the property \eqref{eq:qcontra} of $T^\alpha$, we expand $\| E^{\alpha}x-x^*\|^2$ by $E^{\alpha}=T^{\alpha}-\alpha R$ as follows:
\begin{align}
  \|E^\alpha x-x^*\|^2 & = \|T^{\alpha} x-\alpha R x-x^*\|^2                                              \\
  \label{eq:expd}
                       & = \|T^{\alpha} x-x^*\|^2 - 2\alpha\dotp{T^{\alpha} x-x^*,R x}+\alpha^2\|R x\|^2.
\end{align}
By Young's inequality, the cross term in \eqref{eq:expd} satisfies
\begin{align}\label{eq:crossyoung}
  - 2\alpha\dotp{T^{\alpha} x-x^*,R x}\le  \alpha\eta\|T^{\alpha} x-x^*\|^2+\alpha\eta^{-1}\|Rx\|^2
\end{align}
for any $\eta>0$, which we will set later.
Substituting \eqref{eq:crossyoung} into \eqref{eq:expd} and then applying Lemma \ref{lemma:estR} yield
\begin{align}
  \|E^\alpha x-x^*\|^2 & \le (1+\alpha\eta)  \|T^{\alpha}x-x^*\|^2+\alpha (\eta^{-1}+\alpha)\|Rx\|^2         \\
                        & \le (1+\alpha\eta) \left(  \|T^{\alpha}x-x^*\|^2+ \eta^{-1} \alpha^3 L^2  \tfrac{m^2}{2} (1+\alpha L)^{2m}\|Sx\|^2 \right).\label{eq:ineq2}
\end{align}
Substituting $x=x^k$ and $\alpha=\alpha_k$ and using \eqref{eq:qcontra} yield
  \begin{align}
    \|E^{\alpha_k} x^k-x^*\|^2 & \le (1+\alpha_k \eta) \left(  \|x^k-x^*\|^2- \Big( \alpha_k(1-\alpha_k)  -  \alpha_k^3 L^2  \tfrac{m^2 (1+\alpha_k L)^{2m} }{2\eta} \Big) \|Sx^k\|^2 \right)\label{eq:ineq3}
  \end{align}
  for all $k$.

The existence of the extra coefficient $\alpha_k\eta$  in~\eqref{eq:ineq3} invalidates the traditional analysis. To ensure convergence, we  take two approaches: using slowly decreasing step sizes in Subsection~\ref{sec:dss} and  making stronger assumptions on $S$ in Subsection~\ref{sec:css}.
  \subsection{Slowly decreasing step sizes}\label{sec:dss}
Our convergence will reduce to the analysis of some simple scalar sequences as follows.
\begin{lemma}\label{lm:multerr}
  Consider the sequences
  \begin{align}
    (a_k)_{k\ge 0}, (b_k)_{k\ge 0}, (\xi_k)_{k\ge 0}\subset \{x\in\RR:x \ge 0\},
  \end{align}
  where $\sum_{k\ge0}\xi_k <\infty$ and
  \begin{align}\label{eq:abineq}
    a_{k+1}\le(1+\xi_k)(a_k-b_k).
  \end{align}
  Then, (i) $(a_k)_{k\ge 0}$ is bounded, (ii) there exists $a^*\in\RR_{+}$ such that $\lim_k a_k = a^*\in\RR_+$, and (iii) $\sum_{k\ge 0} b_k <\infty$. 
\end{lemma}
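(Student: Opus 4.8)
The plan is to rescale the sequence $(a_k)$ by the partial products of the factors $(1+\xi_k)$, so as to turn the perturbed inequality \eqref{eq:abineq} into a clean monotone (super-martingale-type) recursion. First I would record the elementary fact that $\sum_{k\ge 0}\xi_k<\infty$ forces the infinite product $\prod_{k\ge 0}(1+\xi_k)$ to converge to a finite, strictly positive limit. Indeed, since $\log(1+\xi_k)\le \xi_k$, the partial sums $\sum_{j=0}^{k-1}\log(1+\xi_j)$ are nondecreasing and bounded above by $\sum_{j\ge 0}\xi_j$, so the partial products $\Pi_k:=\prod_{j=0}^{k-1}(1+\xi_j)$, with $\Pi_0:=1$, increase to some $\Pi_\infty\in[1,\exp(\sum_{j}\xi_j)]\subset(0,\infty)$. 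In particular $1\le \Pi_k\le \Pi_\infty<\infty$ for every $k$.

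Next I would set $\tilde a_k:=a_k/\Pi_k$ and divide \eqref{eq:abineq} by $\Pi_{k+1}=\Pi_k(1+\xi_k)$. Because the factor $(1+\xi_k)$ cancels exactly, this yields
\[
  \tilde a_{k+1}\le \tilde a_k-\frac{b_k}{\Pi_k}.
\]
Since $b_k\ge 0$, the sequence $(\tilde a_k)$ is nonincreasing, and it is bounded below by $0$ because $a_k\ge 0$ and $\Pi_k>0$. Hence $(\tilde a_k)$ converges to some $\tilde a^*\ge 0$. Multiplying back by the convergent factor $\Pi_k\to\Pi_\infty$ then gives $a_k=\tilde a_k\,\Pi_k\to \tilde a^*\Pi_\infty=:a^*\in\RR_+$, which is claim (ii); claim (i) is immediate, since a convergent real sequence is bounded.

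For (iii) I would telescope the same monotone recursion: summing $b_k/\Pi_k\le \tilde a_k-\tilde a_{k+1}$ over $k=0,\ldots,N$ gives
\[
  \sum_{k=0}^{N}\frac{b_k}{\Pi_k}\le \tilde a_0-\tilde a_{N+1}\le \tilde a_0 = a_0<\infty,
\]
so that $\sum_{k\ge 0}b_k/\Pi_k<\infty$. Using $\Pi_k\le\Pi_\infty$ I would then bound $b_k=\Pi_k\,(b_k/\Pi_k)\le \Pi_\infty\,(b_k/\Pi_k)$, whence $\sum_{k\ge 0}b_k\le \Pi_\infty\sum_{k\ge 0}b_k/\Pi_k<\infty$.

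The only delicate point is the first step — controlling the infinite product — and I expect that to be the main (though standard) obstacle. One must verify both that $\Pi_\infty<\infty$, in order to transfer convergence and summability from the rescaled sequence back to the original one, and that $\Pi_k$ stays bounded away from $0$, which here is trivial since $\Pi_k\ge 1$, so that dividing by $\Pi_k$ is harmless. Everything after the rescaling reduces to the textbook fact that a monotone sequence bounded below converges, applied to $(\tilde a_k)$.
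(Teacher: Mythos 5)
Your proof is correct, but it follows a genuinely different route from the paper's. You normalize by the partial products $\Pi_k=\prod_{j=0}^{k-1}(1+\xi_j)$, which converts the perturbed inequality into the exact monotone recursion $\tilde a_{k+1}\le \tilde a_k - b_k/\Pi_k$; all three claims then follow from the monotone convergence theorem plus one telescoping sum, the only analytic input being $1\le \Pi_k\le \exp\big(\sum_j \xi_j\big)<\infty$. The paper instead works with the unnormalized sequence: it unrolls the recursion to $a_{k+1}\le \big(\prod_{j=0}^{k}(1+\xi_j)\big)a_0-\sum_{j=0}^{k}\big(\prod_{i=j}^{k}(1+\xi_i)\big)b_j$, which yields (i) and (iii) directly, but then part (ii) requires a separate bounded-variation argument: setting $d_k=a_k-a_{k-1}$, splitting $d_k=d_k^+-d_k^-$, bounding $d_{k+1}^+\le \xi_k\,\xi a_0$ via the recursion and the already-established boundedness, deducing $\sum_k d_k^-<\infty$ from nonnegativity of $a_k$, and concluding that $(a_k)$ is Cauchy because $\sum_k|d_k|<\infty$. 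Your rescaling subsumes that entire step---monotonicity of $(\tilde a_k)$ together with convergence of $\Pi_k$ gives convergence of $a_k=\tilde a_k\Pi_k$ at once---and it also produces clean explicit bounds, namely $a^*=\tilde a^*\Pi_\infty$ and $\sum_k b_k\le \Pi_\infty a_0$ (the analogue of the paper's $\sum_j b_j\le \xi a_0$), while avoiding the paper's slightly awkward notation in which $\xi$ is defined as a $k$-dependent product but used as a constant. What the paper's route makes explicit in exchange is the by-product $\sum_k|a_{k+1}-a_k|<\infty$ (finite total variation of $(a_k)$); your argument delivers this too, implicitly, since $a_k$ is a product of two bounded monotone sequences, but neither proof needs more than convergence for the lemma's later use in Theorem~\ref{thm:decreasingalpha2}. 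Your handling of the one delicate point is also sound: dividing \eqref{eq:abineq} by $\Pi_{k+1}>0$ preserves the inequality regardless of the sign of $a_k-b_k$, and $\Pi_k\ge 1$ makes the division harmless, exactly as you note.
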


In the simplified case $\xi_k\equiv 0$, the results hold trivially. Indeed, since $a_k\ge 0$ and $a_{k+1}\le a_k$ by \eqref{eq:abineq}, there exists $a^*\in \RR_+$ such that $a_k\to a^*$, and the telescoping sum of \eqref{eq:abineq} yields $\sum_{k\ge 0} b_k <\infty$. 
Lemma \ref{lm:multerr} claims that these results still hold under the multiplicative errors $\xi_k$ that are summable.
\begin{proof}
  Expanding \eqref{eq:abineq} yields
  \begin{align}
    a_{k+1} & \le (1+\xi_k)(1+\xi_{k-1})a_{k-1} - \big((1+\xi_k)b_k+(1+\xi_k)(1+\xi_{k-1})b_{k-1}\big)                       \\
    \label{eq:akbnd}
            & \le \cdots \le \big(\Pi_{j=0}^k(1+\xi_j)\big) a_0 - \sum_{j=0}^k\big(\big(\Pi_{i=j}^k(1+\xi_i)\big)  b_j\big).
  \end{align}
  Let $\xi:=\Pi_{j=0}^k(1+\xi_j)$. Noticing $(1+\xi_j)\leq e^{\xi_j}$ for $\xi_j\ge 0$ and using $\sum_{k\ge 0} \xi_k<\infty$ gives us
  $
  \xi <\infty.
  $
  Applying the inequality $ \sum_{j=0}^k\big(\big(\Pi_{i=j}^k(1+\xi_i)\big)  b_j\big)\ge \sum_{j=0}^k b_j$ to \eqref{eq:akbnd}, we obtain
  \begin{align}
    a_{k+1} & \le {\xi} a_0 - \sum_{j=0}^k b_j,
  \end{align}
  which means that $ a_k\in[0,{\xi} a_0]$ for all $k\ge 0$ and $\sum_{j\ge 0} b_j\le {\xi} a_0 <\infty$. We have proved Parts (i) and (iii).

%
{{To prove Part (ii), let} $d_0:=a_0$ and $d_k:=a_k-a_{k-1},k=1,2,\cdots$. Below we show that $\sum_{k=0}^\infty |d_k|<\infty$.  To this end,
let
\begin{align}
 d_k^+:=\max\{d_k,0\}\quad\text{and}\quad d_k^-:=-\min\{d_k,0\},\quad\forall k,
 \end{align} which yield $d_k^++d_k^-=|d_k|$ and $d_k^+-d_k^-=d_k$. By $a_{k+1}\leq(1+\xi_k)(a_k-b_k)$,  $a_k\leq \xi a_0$, and $b_k\ge 0$, we have $a_{k+1}-a_k\leq \xi_k\xi a_0$, which means $d_{k+1}^+=\max\{d_{k+1},0\}=\max\{a_{k+1}-a_k,0\}\leq\xi_k\xi a_0,\forall k,$ and thus, by summing over $k$, $$\sum_{k=0}^\infty d_k^+\leq a_0+\xi a_0\sum_{k=0}^\infty \xi_k<\infty.$$
  From $0\leq a_k= \sum_{i=0}^k d_k= \sum_{i=0}^k (d_k^+-d_k^-),~\forall k$, we obtain $\sum_{i=0}^k d_k^-\leq\sum_{i=0}^k d_k^+$ and thus
  $\sum_{i=0}^\infty d_k^-<\infty.$  Finally, $$\sum_{k=0}^\infty |d_k|=\sum_{k=0}^\infty d_k^++\sum_{k=0}^\infty d_k^-<\infty.$$Hence, $\{a_k\}_{k\ge 0}$ is a Cauchy sequence, and Part (ii) holds.}\hfill
\end{proof}
{\begin{lemma}\label{lm:smbconv}
Let $(a_k)_{k\ge 0}$  be a nonnegative sequence with the following properties: 
  \begin{enumerate}
  \item\label{smb} $
  \sum_{k=0}^\infty\frac{1}{\sqrt{k}}a_k<+\infty,
  $
  and 
  \item  there exists some $B>0$ such that for all $k\geq 1$,
    $|a_{k+1}-a_k|\le Bk^{-\frac{1}{2}}$.
\end{enumerate}   
  Then, we have $\lim_{k\to\infty} a_k=0$.
\end{lemma}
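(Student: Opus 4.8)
The plan is to argue by contradiction. Suppose $a_k\not\to 0$. Since $(a_k)$ is nonnegative, this means there is some $\epsilon>0$ and an infinite set of indices $k$ with $a_k\ge \epsilon$. The guiding idea is that the increment bound (property 2) prevents $a_k$ from dropping quickly, so each index where $a_k\ge\epsilon$ forces an entire \emph{window} of neighboring indices on which $a_k\ge\epsilon/2$. The weighted sum (property 1) then accumulates a fixed positive amount on each such window; arranging the windows to be disjoint contradicts the convergence of the weighted sum. Throughout I work with $k\ge 1$, the $k=0$ term being irrelevant.

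First I would make the window precise. Fix an index $k$ (large) with $a_k\ge\epsilon$ and set $N:=\lfloor \epsilon\sqrt{k}/(2B)\rfloor$. For any $\ell$ with $k\le\ell\le k+N$, telescoping together with property 2 gives
\begin{align}
a_\ell \ge a_k-\sum_{i=k}^{\ell-1}|a_{i+1}-a_i| \ge \epsilon-\sum_{i=k}^{k+N-1}\frac{B}{\sqrt{i}} \ge \epsilon-\frac{BN}{\sqrt{k}} \ge \frac{\epsilon}{2},
\end{align}
where the last step uses $N\le \epsilon\sqrt{k}/(2B)$. Thus $a_\ell\ge\epsilon/2$ for every $\ell$ in the window $[k,k+N]$. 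Next I would estimate the contribution of this window to the weighted sum. Since every index $\ell$ in the window satisfies $\ell\le k+N\le 2k$ for $k$ large, we have $1/\sqrt{\ell}\ge 1/\sqrt{2k}$, and hence
\begin{align}
\sum_{\ell=k}^{k+N}\frac{a_\ell}{\sqrt{\ell}} \ge \frac{\epsilon}{2}\cdot\frac{N+1}{\sqrt{2k}} \ge \frac{\epsilon}{2\sqrt{2k}}\cdot\frac{\epsilon\sqrt{k}}{4B} = \frac{\epsilon^2}{8\sqrt{2}\,B} =: c>0,
\end{align}
using $N\ge \epsilon\sqrt{k}/(4B)$ for $k$ large. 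The crucial point is that the window length $N$ scales like $\sqrt{k}$, which exactly cancels the $1/\sqrt{k}$ weight and leaves a constant lower bound $c$ independent of $k$.

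Finally I would extract disjoint windows. Since there are infinitely many indices with $a_k\ge\epsilon$, I can greedily choose $k_1<k_2<\cdots$ with $a_{k_j}\ge\epsilon$, all large enough for the above estimates, and $k_{j+1}>k_j+N_j$, so that the windows $[k_j,k_j+N_j]$ are pairwise disjoint. Summing the per-window bound over these disjoint windows yields $\sum_{k\ge 1}a_k/\sqrt{k}\ge\sum_j c=\infty$, contradicting property 1; hence $a_k\to 0$.

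I expect the main difficulty to be bookkeeping rather than conceptual: correctly matching the window length to the weight so that the $\sqrt{k}$ factors cancel, keeping the window in a range where $\ell\asymp k$ (so $1/\sqrt{\ell}$ stays comparable to $1/\sqrt{k}$), and selecting the subsequence so the windows are genuinely disjoint. The conceptual heart is simply recognizing that the two $1/\sqrt{k}$ scales in the hypotheses are matched on purpose, so that a persistent lower bound $\epsilon$ on a $\sqrt{k}$-length window produces an $O(1)$ contribution to the weighted series.
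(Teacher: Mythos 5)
Your proof is correct and takes essentially the same approach as the paper's: both argue by contradiction, use the increment bound to force $a_\ell\ge\epsilon/2$ on a window of length proportional to $\sqrt{k}$ around each index where $a_k\ge\epsilon$, and observe that this window contributes a constant amount to the weighted series, contradicting summability. The only cosmetic differences are that you lower-bound the window's contribution by counting terms with $1/\sqrt{\ell}\ge 1/\sqrt{2k}$ (the paper uses an integral estimate) and conclude via explicitly disjoint windows rather than invoking Cauchy's criterion.
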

\begin{proof}
We apply proof by contradiction and 
assume a constant $C>0$ such that $\limsup_{k\to\infty}a_k\ge C$. For each $N>0$, there exists $n\geq N$ such that $F:=a_n\geq\frac{C}{2}$.
By Part 2, $a_{k+1}-a_k\geq-B k^{-\frac{1}{2}}$, so we have $a_{n+i}\geq\frac{F}{2}$ for all $0\leq i\leq\lfloor n'\rfloor$, where $n'=\tfrac{F\sqrt{n}}{2B}$.
Then we obtain
\begin{align}
\sum_{i=n}^{n+\lfloor n'\rfloor}i^{-1/2}a_i&\geq\frac{F}{2}\sum_{i=n}^{n+\lfloor n'\rfloor}i^{-1/2}\geq\frac{F}{2}\int_n^{n+n'}x^{-1/2}dx\\
&\geq F\cdot(\sqrt{n+n'}-\sqrt{n})=\tfrac{F^2\sqrt{n}}{2B(\sqrt{n+n'}+\sqrt{n})}.
\end{align}
For any sufficiently large $n$, we have $n'=\tfrac{F\sqrt{n}}{2B}\leq 3n$, so the last term above is at least $\tfrac{F^2}{6B}\geq\frac{C^2}{24B}$. Therefore, by Cauchy's criterion, the sequence $\frac{1}{\sqrt{k}}a_k$ is not summable, which  contradicts Part~1. Hence, we  must have $\limsup_{k\to\infty} a_k=0$. Since all $a_k\ge 0$, we have $\lim_{k\to\infty} a_k=0$.
\end{proof}}

Applying 
 Lemma \ref{lm:multerr} {and \ref{lm:smbconv}}, we can establish the following convergence result.
\begin{theorem}\label{thm:decreasingalpha2}
Let Algorithm \ref{alg:cyccu} use the step size sequence
\begin{align}\label{eq:alphadim3}
  \alpha_k = \frac{1}{k^{1/2}}.
\end{align}
Under Assumption \ref{assump:S}, $x^k$ (weakly) converges to $x^*$ for some $x^*\in\zer(S)$.
\end{theorem}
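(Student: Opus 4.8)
The plan is to turn the master inequality \eqref{eq:ineq3} into the hypothesis of Lemma~\ref{lm:multerr}, then upgrade the resulting weighted summability into a true limit via Lemma~\ref{lm:smbconv}, and finally close with a Fej\'er-monotonicity/demiclosedness argument. The first and most delicate choice is the still-free parameter $\eta$ in \eqref{eq:ineq3}: I would let it depend on $k$ as $\eta_k=c\,\alpha_k^2$ with a constant $c>L^2m^2/2$. A \emph{constant} $\eta$ is impossible here, since $\alpha_k=k^{-1/2}$ makes $\sum_k\alpha_k=\infty$; but with $\eta_k=c\,\alpha_k^2$ the multiplicative error is $\xi_k:=\alpha_k\eta_k=c\,k^{-3/2}$, which is summable, while the coefficient of $\|Sx^k\|^2$ in \eqref{eq:ineq3} becomes $\alpha_k\big[(1-\alpha_k)-\tfrac{L^2m^2}{2c}(1+\alpha_kL)^{2m}\big]$. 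As $\alpha_k\to0$ the bracket tends to $1-\tfrac{L^2m^2}{2c}>0$, so it is bounded below by a fixed $\kappa>0$ for all $k\ge K_0$. Applying Lemma~\ref{lm:multerr} to the tail $k\ge K_0$, with $a_k=\|x^k-x^*\|^2$ for an arbitrary fixed $x^*\in\zer(S)$, $\xi_k=\alpha_k\eta_k$, and $b_k$ the coefficient times $\|Sx^k\|^2$, yields three things: $(x^k)$ is bounded, $\lim_k\|x^k-x^*\|$ exists, and
\begin{align}
  \sum_{k\ge K_0}\alpha_k\|Sx^k\|^2\le\kappa^{-1}\sum_{k\ge K_0}b_k<\infty.
\end{align}

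The subtle point is that $\sum_k\alpha_k\|Sx^k\|^2<\infty$ does \emph{not} by itself force $\|Sx^k\|\to0$, precisely because $\sum_k\alpha_k=\infty$; this is exactly the role of Lemma~\ref{lm:smbconv}, applied with $a_k:=\|Sx^k\|^2$. Condition~\ref{smb} of that lemma is the summability just established. For the increment condition I would first note that boundedness of $(x^k)$ together with $Sx^*=0$ and the $2$-Lipschitz bound \eqref{lipschitz} gives $\|Sx^k\|\le 2\|x^k-x^*\|\le M$ for some $M$. Writing the update through $E^{\alpha_k}=T^{\alpha_k}-\alpha_kR$ and $T^{\alpha_k}x^k-x^k=-\alpha_k Sx^k$ gives $x^{k+1}-x^k=-\alpha_k(Sx^k+Rx^k)$, so by Lemma~\ref{lemma:estR},
\begin{align}
  \|x^{k+1}-x^k\|\le\alpha_k\big(1+\tfrac{\alpha_kLm}{\sqrt2}(1+\alpha_kL)^m\big)\|Sx^k\|\le C'\alpha_k
\end{align}
for a constant $C'$. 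Then $|a_{k+1}-a_k|=\big|\,\|Sx^{k+1}\|-\|Sx^k\|\,\big|\,\big(\|Sx^{k+1}\|+\|Sx^k\|\big)\le 2M\,\|Sx^{k+1}-Sx^k\|\le 4M\,\|x^{k+1}-x^k\|\le B\,k^{-1/2}$, which is condition~2. Lemma~\ref{lm:smbconv} then delivers $\|Sx^k\|^2\to0$, hence $\|Sx^k\|\to0$.

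It remains to pass from the two facts now in hand---that $\lim_k\|x^k-x^*\|$ exists for \emph{every} $x^*\in\zer(S)$, and that $\|Sx^k\|\to0$---to weak convergence, which is a standard Opial-type argument. Since $(x^k)$ is bounded in the Hilbert space $\cH$, it has weak sequential cluster points. The demiclosedness principle for the nonexpansive operator $I-S$ (Browder's demiclosedness at $0$, valid under Assumption~\ref{assump:S}) states that if a subsequence $x^{k_j}$ converges weakly to $\bar x$ and $(I-S)x^{k_j}-x^{k_j}=-Sx^{k_j}\to0$, then $\bar x\in\Fix(I-S)=\zer(S)$; thus every weak cluster point lies in $\zer(S)$. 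Combining this with the existence of $\lim_k\|x^k-x^*\|$ for each $x^*\in\zer(S)$, Opial's lemma forces the weak cluster point to be unique, so $x^k$ converges weakly to a single $x^*\in\zer(S)$.

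The main obstacle I anticipate is the two-stage residual analysis of the first two paragraphs. The $O(k^{-1/2})$ step sizes are not summable, so the Fej\'er inequality only gives the weighted summability $\sum_k\alpha_k\|Sx^k\|^2<\infty$, from which $\|Sx^k\|\to0$ cannot be read off directly; bridging this gap is exactly what the increment estimate $|a_{k+1}-a_k|\le Bk^{-1/2}$ and Lemma~\ref{lm:smbconv} accomplish. The technical crux is making the constants line up---choosing $\eta_k=c\,\alpha_k^2$ with $c>L^2m^2/2$ so that the multiplicative error is summable \emph{and} the $\|Sx^k\|^2$ coefficient stays bounded below by $\kappa\alpha_k$, and then verifying that the \emph{same} sequence $\|Sx^k\|^2$ satisfies both hypotheses of Lemma~\ref{lm:smbconv}. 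By contrast, the concluding weak-convergence step is routine once demiclosedness and Fej\'er monotonicity are established.
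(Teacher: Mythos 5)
Your proposal is correct and follows essentially the same route as the paper's own proof: the paper takes $\eta=m^2L^2\alpha_k^2(1+\alpha_kL)^{2m}$ in \eqref{eq:ineq3} (your $\eta_k=c\,\alpha_k^2$ with $c>L^2m^2/2$ is the same idea, just with the $(1+\alpha_kL)^{2m}$ factor absorbed differently, making the $\|Sx^k\|^2$-coefficient exactly $\alpha_k(\tfrac12-\alpha_k)$ rather than $\kappa\alpha_k$ for a tail), then applies Lemma~\ref{lm:multerr} to a tail of the sequence, derives the same increment bound $|\|Sx^{k+1}\|^2-\|Sx^k\|^2|\le Bk^{-1/2}$ via $x^{k+1}-x^k=-\alpha_k(Sx^k+Rx^k)$ and Lemma~\ref{lemma:estR}, invokes Lemma~\ref{lm:smbconv} to get $\|Sx^k\|\to0$, and closes with demiclosedness plus the same Opial-type uniqueness argument. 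You correctly identified the two-stage residual analysis as the crux, and your constants line up as claimed.
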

\begin{proof}
We choose $\eta=m^2 L^2\alpha_k^2(1+\alpha_kL)^{2m}$ in~\eqref{eq:ineq3} to get
\begin{align}
  \|x^{k+1}-x^*\|^2 \le
  (1+m^2 L^2\alpha_k^3(1+\alpha_kL)^{2m})\Big(\|x^k-x^*\|^2 -
  \alpha_k \left(\tfrac{1}{2}-\alpha_k  \right) \|Sx^k\|^2 \Big) \,.
\end{align}
With \eqref{eq:alphadim3} and $(1+\alpha_kL)^{2m}\leq (1+L)^{2m}$, we have $\sum_k m^2 L^2\alpha_k^3(1+\alpha_kL)^{2m}<\infty$, and $\alpha_k (\frac{1}{2} - \alpha_k )\ge 0$ for $k\ge 4$. Since $E^{\alpha_k}$ is $(1+\alpha_kL)^m$-Lipschitz and $\alpha_k\le 1$, $\|x^{4}-x^*\|\le (1+L)^{4} \|x^{0}-x^*\|$.

  By Lemma \ref{lm:multerr} (applied to $k\ge 4$), $\|x^{k+1}-x^*\|$ converges to some $c\ge 0$, and thus $(x^k)_{k\ge 0}$ is bounded and has a weak cluster point $\bar{x}$, and
  \begin{align}
    \sum_{k\ge 4} \alpha_k \bigg(\frac{1}{2}-\alpha_k\bigg)\|Sx^k\|^2 <\infty.\label{eq:summable0}
  \end{align}
Since $\frac{1}{4\sqrt{k}}\leq\alpha_k \big(\frac{1}{2}-\alpha_k\big)\leq\frac{1}{2\sqrt{k}}$ for $k\geq 16$, we obtain from \eqref{eq:summable0}:
\begin{equation}
  \sum_{k\ge 4} \frac{1}{\sqrt{k}}\|Sx^k\|^2 <\infty.\label{eq:summable}
  \end{equation}
%
By Lemma \ref{lm:multerr}, there exists some $A\in\RR$ such that $\|x_k-x^*\|\leq A$. We have
\begin{align}
\|x^{k+1}-x^k\|&=\|E^{\alpha_k}x^k-x^k\|=\|T^{\alpha_k}x^k-\alpha_kRx^k-x^k\|
=\|-\alpha_kSx^k-\alpha_kRx^k\| .
\end{align}
By $Sx^*=0$, the triangle inequality, and~\eqref{eq:Rlip},
\begin{align}
\|x^{k+1}-x^k\|&={\|-\alpha_kSx^k+\alpha_kSx^*-\alpha_kRx^k\|}\\
&\leq \alpha_k\|Sx^k-Sx^*\|+\alpha_k\|Rx^k\|\\
&\leq\big(\alpha_k+\tfrac{\alpha_k^2 L m}{\sqrt{2} }(1+\alpha_k L)^{m}\big)\|Sx^k-Sx^*\|.
\end{align}
Because $S$ is 2-Lipschitz, $\alpha_k\le 1$, and $\|x_k-x^*\|\leq A$, the above inequality yields
\begin{align}
\|x^{k+1}-x^k\|&\leq 2\alpha_k\big(1+\tfrac{\alpha_k L m}{\sqrt{2} }(1+\alpha_k L)^{m}\big)\|x^k-x^*\|\\
&\leq 2\alpha_k\big(1+\tfrac{ L m}{\sqrt{2} }(1+ L)^{m}\big)A\\
&\leq 2\big(1+\tfrac{ L m}{\sqrt{2} }(1+ L)^{m}\big)A \cdot k^{-\frac{1}{2}}.
\end{align}
Then we have
\begin{align}
\left|\|Sx^{k+1}\|^2-\|Sx^k\|^2\right|&=\left|\|Sx^{k+1}\|-\|Sx^k\|\right|(\|Sx^{k+1}\|+\|Sx^k\|)\\
&\leq \|Sx^{k+1}-Sx^k\|(\|Sx^{k+1}-Sx^*\|+\|Sx^k-Sx^*\|)\\
&\leq 2\|x^{k+1}-x^k\|(2\|x^{k+1}-x^*\|+2\|x^k-x^*\|)\\
&\leq \underbrace{2\cdot 2\big(1+\tfrac{ L m}{\sqrt{2} }(1+ L)^{m}\big)A\cdot 4A}_{=:B}\cdot k^{-\frac{1}{2}} = B k^{-\frac{1}{2}} .
\end{align}
{Hence, $(\|Sx^k\|^2)_{k\geq 0}$ satisfies the two conditions of Lemma~\ref{lm:smbconv}, so $\lim_{k\to\infty}\|Sx^k\|^2=0$.}

{Finally,} we adapt the convergence proof of KM iteration \cite{krasnosel1955two} to our setting. 

  Recall the demicloseness principle (cf. textbook \cite{BauschkeCombettes2011}): if $T$ is nonexpansive, $z^j \rightharpoonup \bar{z}$, and $\lim\|(I-T)z^j\|=0$, then $\bar{z}=T(\bar{z})$. Applying this principle to $T=I-S$ and the subsequence of $(x^k)_{k\ge 0}$ that weakly converges to $\bar{x}$, we obtain $\bar{x}\in\zer(S)$.

  Next, we show that any weak cluster point $\bar{y}$ of $(x^k)_{k\ge 0}$ must equal $\bar{x}$. The demicloseness principle again yields $\bar{y}\in\zer(S)$. Substituting $x^*=\bar{x}$ and then $x^*=\bar{y}$ and following the argument above yield the limits $\lim_{k}\|x^{k+1}-\bar{x}\|=c_x$ and $\lim_{k}\|x^{k+1}-\bar{y}\|=c_y$. Algebraically,
  \begin{align}\label{eq:5term}
    2\dotp{x^k,\bar{x}-\bar{y}} = \|x^k-\bar{x}\|^2 -\|x^k -\bar{y}\|^2 + \|\bar{x}\|^2-\|\bar{y}\|^2,
  \end{align}
  whose right-hand side converges to the constant $c':=c_x^2-c_y^2+\|\bar{x}\|^2-\|\bar{y}\|^2$ as $k\to\infty$. Passing the limits of \eqref{eq:5term} over the two subsequences that weakly converge to $\bar{x}$ and to $\bar{y}$, respectively, yields
  $2\dotp{\bar{x},\bar{x}-\bar{y}}=2\dotp{\bar{y},\bar{x}-\bar{y}}=c'.$
  Hence, $\|\bar{x}-\bar{y}\|^2=0$ and $(x^k)_{k\ge 0}$
  weakly converges to $\bar{x}\in\zer(S)$. \hfill \end{proof}

  It follows immediately from \cite[Lemma 3]{davis2014convergence} that $\min_{j\le k}\big\{ \frac{1}{\sqrt{j}}\|Sx^j\|^2\big\} = {o\big( \frac{1}{k} \big)}.$ Therefore, we have the following Corollary.
  \begin{corollary}
    Under the setting of Theorem \ref{thm:decreasingalpha2}, the reduction rate of running-minimal residual is
    \begin{align}
    \min_{j\le k}\{\|Sx^j\|^2\} = {o\big(\frac{1}{\sqrt{k}}\big)}.
    \end{align}
  \end{corollary}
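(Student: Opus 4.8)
The plan is to reuse the weighted summability already proven inside Theorem~\ref{thm:decreasingalpha2}. Specifically, inequality~\eqref{eq:summable} gives $\sum_{k\ge 4}\frac{1}{\sqrt{k}}\|Sx^k\|^2<\infty$, so the nonnegative sequence $c_j:=\frac{1}{\sqrt{j}}\|Sx^j\|^2$ is summable. The guiding idea is that summability forces the running minimum of $(c_j)$ to decay like $o(1/k)$, and then one only needs to undo the $\frac{1}{\sqrt{j}}$ weight at a cost controlled by $\sqrt{k}$.

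First I would apply \cite[Lemma 3]{davis2014convergence} to the summable sequence $(c_j)_{j\ge 4}$, which yields the weighted rate $\min_{j\le k}\{\frac{1}{\sqrt{j}}\|Sx^j\|^2\}=o(1/k)$, exactly the statement quoted just before the corollary. Next I would pass from the weighted running minimum to the unweighted one. Let $j^\star\le k$ attain $\min_{j\le k}c_j$. Since $j^\star\le k$ implies $\sqrt{j^\star}\le\sqrt{k}$, we get
$$\min_{j\le k}\|Sx^j\|^2\le\|Sx^{j^\star}\|^2=\sqrt{j^\star}\,c_{j^\star}\le\sqrt{k}\,\min_{j\le k}c_j=\sqrt{k}\cdot o(1/k)=o\big(1/\sqrt{k}\big),$$
which is the claimed bound.

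I do not expect a substantive obstacle, since the whole argument is a reweighting of an already-established little-$o$ rate. The only point requiring a moment's care is that the index $j^\star$ minimizing the weighted quantity $c_j$ need not minimize the unweighted residual $\|Sx^j\|^2$; this is harmless because the estimate only uses the one-sided bound $\min_{j\le k}\|Sx^j\|^2\le\|Sx^{j^\star}\|^2$, whose direction is unaffected by the mismatch. The worst-case factor $\sqrt{j^\star}\le\sqrt{k}$ is precisely what turns the $o(1/k)$ weighted rate into the stated $o(1/\sqrt{k})$ unweighted rate.
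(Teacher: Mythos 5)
Your proposal is correct and matches the paper's own argument: the paper likewise applies \cite[Lemma 3]{davis2014convergence} to the summable sequence $\big(\tfrac{1}{\sqrt{k}}\|Sx^k\|^2\big)$ from \eqref{eq:summable} to get $\min_{j\le k}\big\{\tfrac{1}{\sqrt{j}}\|Sx^j\|^2\big\}=o\big(\tfrac{1}{k}\big)$, and then removes the weight via $\sqrt{j}\le\sqrt{k}$ exactly as you do. Your only addition is making explicit the harmless mismatch between the weighted and unweighted minimizers, a step the paper leaves implicit in its ``Therefore.''
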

Note that we do not write $\min_{j\le k}\{\|Sx^j\|\}=o(1/k^{1/4})$ since $\|Sx^k\|^2$ naturally appears in our analysis.

In the current setting, we cannot expect to have a convergence rate {for} $\|x^k - x^*\|$. 

  \subsection{Fixed step size}\label{sec:css}
  In Theorem \ref{thm:decreasingalpha2}, $\alpha_k$ in \eqref{eq:alphadim3} are decreasing. 
Next, we study convergence under the fixed step size $\alpha_k\equiv \alpha<1$. We need an additional assumption as follows:
  \begin{assumption}[quasi-$\mu$-strong monotonicity]\label{assump:qstrong}
  There exists some $\mu>0$ such that the operator $S$ satisfies \begin{align}\label{eq:qstrong}
      \langle Sx,x-x^*\rangle \ge \mu \|x-x^*\|^2,\quad\forall x^*\in\zer (S),x\in\cH.
    \end{align}
  \end{assumption}
  This assumption {is weaker than assuming $S$ be strongly monotone. In addition, our assumption} ensures that $x^*$ is the sole element of $\zer(S)$. Indeed, any $\bar{x}\in\zer(S)$ must obey $0=\langle S\bar{x},\bar{x}-x^*\rangle\ge \mu^2\|\bar{x}-x^*\|^2$.

  {By the Cauchy-Schwarz inequality}, Assumption~\ref{assump:qstrong} also implies $\|Sx\|\geq\mu\|x-x^*\|$. Note that $\|Sx\|=\|Sx-Sx^*\|\leq 2\|x-x^*\|$, so we at least know $\mu\leq 2$.
\begin{theorem}\label{thm:constantalpha}
Under Assumptions \ref{assump:S} and~\ref{assump:qstrong}, using the fixed step size
  \begin{align}\label{eq:alphacond}
     \alpha=\min\left\{\frac{1}{4mL},\frac{\mu}{4\sqrt{2}mL},\frac{2mL}{17mL+2\mu^2}\right\}
    \end{align}
    (where the constants naturally appear in the proof below),
Algorithm \ref{alg:cyccu}  generates a sequence $x^k$ that converges to $ x^*\in\zer(S)$.
We further have $$\|x^k-x^*\|^2\leq \rho^k\|x^0-x^*\|^2$$ with $\rho = 1 -  \frac{ \alpha \mu^2 }{2}< 1$. When $m$ and $\frac{L}{\mu}$ are large, we have $\alpha=O(\frac{\mu}{mL})$.
  \end{theorem}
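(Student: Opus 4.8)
The plan is to convert the residual‑based descent inequality \eqref{eq:ineq3} into a genuine geometric contraction in $\|x^k-x^*\|^2$ by invoking Assumption~\ref{assump:qstrong}. Fixing $\alpha_k\equiv\alpha$ in \eqref{eq:ineq3}, I would apply the consequence $\|Sx^k\|\ge\mu\|x^k-x^*\|$ of quasi‑$\mu$‑strong monotonicity (noted just after Assumption~\ref{assump:qstrong}) to replace $\|Sx^k\|^2$ by $\mu^2\|x^k-x^*\|^2$. This replacement is legitimate only once the coefficient $\alpha(1-\alpha)-\tfrac{\alpha^3L^2m^2(1+\alpha L)^{2m}}{2\eta}$ multiplying $\|Sx^k\|^2$ has been forced to be nonnegative, so the first task of $\alpha$ and of the free Young parameter $\eta$ is to guarantee this. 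The outcome is a one‑step bound
\begin{align*}
\|x^{k+1}-x^*\|^2\le\rho\,\|x^k-x^*\|^2,\qquad \rho:=(1+\alpha\eta)\Big(1-\mu^2\big(\alpha(1-\alpha)-\tfrac{\alpha^3L^2m^2(1+\alpha L)^{2m}}{2\eta}\big)\Big).
\end{align*}
Iterating this immediately yields $\|x^k-x^*\|^2\le\rho^k\|x^0-x^*\|^2$, and since $\rho<1$ this gives \emph{strong} convergence $x^k\to x^*$, with $x^*$ the unique zero guaranteed by Assumption~\ref{assump:qstrong}.

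The heart of the argument is choosing $\eta$ and verifying that the prescribed $\alpha$ forces $\rho\le 1-\tfrac{\alpha\mu^2}{2}$. I would balance the two error contributions by taking $\eta$ proportional to $\alpha^2L^2m^2(1+\alpha L)^{2m}$, so that the spurious term $\tfrac{\alpha^3L^2m^2(1+\alpha L)^{2m}}{2\eta}$ becomes a fixed fraction of the leading $\alpha$ and the effective coefficient takes the clean form $\alpha(\text{const}-\alpha)$. Expanding $\rho$ and discarding the negative cross term $-\alpha\eta\mu^2 c$ (where $c$ is the above coefficient), the desired inequality reduces to a scalar condition of the shape $\alpha^2L^2m^2(1+\alpha L)^{2m}\lesssim\mu^2(\text{const}-\alpha)$. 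Here the three step‑size bounds play distinct roles: $\alpha\le\tfrac{1}{4mL}$ tames the exponential factor through $(1+\alpha L)^{2m}\le(1+\tfrac{1}{4m})^{2m}\le e^{1/2}$ and bounds $\alpha Lm$; $\alpha\le\tfrac{\mu}{4\sqrt2\,mL}$ is exactly what makes $\alpha Lm$ small enough relative to $\mu$ to close the inequality; and $\alpha\le\tfrac{2mL}{17mL+2\mu^2}$ keeps $\text{const}-\alpha$ bounded away from zero (it forces $\alpha<\tfrac18$). This is the sense in which ``the constants naturally appear in the proof.''

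The main obstacle I expect is precisely this constant‑chasing: one must \emph{simultaneously} keep $c\ge 0$, keep $(1+\alpha\eta)$ close to $1$, and keep $(1+\alpha L)^{2m}$ controlled, and then show that the three competing upper bounds on $\alpha$ conspire to deliver the sharp rate $1-\tfrac{\alpha\mu^2}{2}$ rather than merely some $\rho<1$. Once the rate is in hand, the remaining claims are routine: geometric decay gives strong convergence directly, with no need for the demiclosedness argument used in Theorem~\ref{thm:decreasingalpha2}; and the asymptotic $\alpha=O(\tfrac{\mu}{mL})$ follows by inspecting which term attains the minimum when $m$ and $\tfrac{L}{\mu}$ are large---there the middle term $\tfrac{\mu}{4\sqrt2\,mL}$ is active, while the third term tends to the constant $\tfrac{2}{17}$.
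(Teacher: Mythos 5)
Your proposal is correct and takes essentially the same route as the paper: both start from \eqref{eq:ineq3} with fixed $\alpha$, use the consequence $\|Sx\|\ge\mu\|x-x^*\|$ of Assumption~\ref{assump:qstrong} to convert the residual-descent inequality into the contraction $\|x^{k+1}-x^*\|^2\le\bigl(1-\tfrac{\alpha\mu^2}{2}\bigr)\|x^k-x^*\|^2$, and then chase constants to verify that \eqref{eq:alphacond} suffices, with your reading of the three bounds' roles (taming $(1+\alpha L)^{2m}\le e^{1/2}$, making $\alpha Lm$ small relative to $\mu$, forcing $\alpha\le\tfrac{2}{17}<\tfrac18$) and of the asymptotics matching the paper's. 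The only bookkeeping difference is the Young parameter: the paper fixes $\eta=\tfrac{\mu^2}{4}$ and substitutes $\alpha=\tfrac{\beta}{2mL}$ with $\bigl(1+\tfrac{\beta}{2m}\bigr)^{2m}<e^\beta$, whereas you take $\eta$ proportional to $\alpha^2L^2m^2(1+\alpha L)^{2m}$ --- which closes the same scalar inequality, provided the proportionality constant strictly exceeds $1$ (e.g.\ $2$, giving $c=\alpha(\tfrac34-\alpha)$), since with exact balancing the leftover $\mu^2\alpha(\tfrac12-\alpha)-\tfrac{\alpha\mu^2}{2}=-\mu^2\alpha^2<0$ cannot absorb the $\alpha\eta$ term.
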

  {\begin{remark}
  It difficult to remove the $\frac{1}{m}$ factor from the step size $\alpha$ in Theorem~\ref{thm:constantalpha}. Although it is quite small, we are able to achieve the same $[1-O(\frac{1}{m})]^k$ convergence rate as established for the block coordinate (proximal) gradient descent method~\cite{hong2013iteration,beck2013convergence}, which solves a minimization problem and is a special case of our Algorithm~\ref{alg:cyccu}. In their case, an objective function can be used to analyze convergence, but it still seems difficult to remove the $\frac{1}{m}$ factor. Therefore, it will be more so in our setting, which does not use any objective function and solely relies on iterate error. 
  \end{remark}}
   \begin{proof}
Combining~\eqref{eq:qcontra} and~\eqref{eq:ineq2} with $x=x^k$ and using $x^{k+1}=E^{\alpha} x^k$ , we obtain\\[-20pt]
\begin{align}
  \| x^{k+1}-x^*\|^2
  \le (1+\alpha \eta) \left(  \|x^k-x^*\|^2- \big( \alpha (1-\alpha)  - \eta^{-1} \alpha^3 L^2  \tfrac{m^2 (1+\alpha L)^{2m} }{2} \big) \|Sx^k\|^2  \right).
\end{align}
We use \eqref{eq:qstrong} to get that
\begin{align}
  \| x^{k+1}-x^*\|^2
&  \le  \|x^k-x^*\|^2  +\frac{\alpha \eta}{\mu^2} \|Sx^k\|^2  \\
&- (1+\alpha \eta)  \left( \alpha (1-\alpha)  - \eta^{-1} \alpha^3 L^2  \tfrac{m^2 (1+\alpha L)^{2m} }{2} \right) \|Sx^k\|^2   .
\end{align}
If we can ensure
\begin{align}
 \frac{1}{2}  +\frac{ \eta}{\mu^2}  -(1+\alpha \eta)  \left(  1-\alpha   - \eta^{-1} \alpha^2 L^2  \tfrac{m^2 (1+\alpha L)^{2m} }{2} \right)   \leq0 ,\label{ineq:alpha}
\end{align}
we will have by Assumption~\ref{assump:qstrong}:
\begin{align}
    \|x^{k+1}-x^*\|^2 &  \le  \|x^{k} - x^*\|^2 - \frac{\alpha}{2} \|Sx^k\|^2  \le \left(1 -  \tfrac{ \alpha \mu^2 }{2} \right) \|x^{k} - x^*\|^2.
\end{align}
Introduce $\beta$ to rewrite $\alpha = \frac{ \beta  }{ 2 m L} $. Let $\eta = \frac{\mu^2}{4} $. By $ \left(1+ \frac{\beta}{2 m} \right)^{2m}  < e^{\beta} $, we have~\eqref{ineq:alpha} provided that
\begin{align}
\tfrac{3}{4}- \left(1+ \tfrac{ \beta  \mu^2 }{ 8mL} \right)  \left(  1- \tfrac{ \beta   }{ 2 m L}  -\tfrac{ \beta^2e^\beta }{ 2\mu^2}  \right)   \leq 0 \,.\label{ineq:beta}
\end{align}
By simplification,~\eqref{ineq:beta} is equivalent to
\begin{align}
\tfrac{\beta^3e^\beta}{16mL}+\beta^2\big(\tfrac{e^\beta}{2\mu^2}+\tfrac{\mu^2}{8m^2L^2}\big)+\beta\big(\tfrac{1}{2mL}-\tfrac{\mu^2}{8mL}\big)\leq \tfrac{1}{4},
\end{align}
When $\beta\leq\frac{1}{2}$, we have $e^\beta<2$ and $e^\beta\beta<1$. Hence, we only need to ensure
\begin{align}
\tfrac{\beta}{32mL}+\big(\tfrac{\beta^2}{\mu^2}+\tfrac{\mu^2\beta}{16m^2L^2}\big)+\beta\big(\tfrac{1}{2mL}-\tfrac{\mu^2}{8mL}\big)\leq \tfrac{1}{4},
\end{align}
which can be guaranteed by
\begin{align}
\tfrac{\beta^2}{\mu^2}\leq\tfrac{1}{8}\text{ and }\beta\big(\tfrac{1}{32mL}+\tfrac{1}{2mL       }+\tfrac{\mu^2}{16m^2L^2}\big)\leq\tfrac{1}{8}.
\end{align}
Therefore we need
\begin{align}
\beta\leq\tfrac{\mu}{2\sqrt{2}}\text{  and }\beta\leq\tfrac{1}{\frac{17}{4mL}+\frac{\mu^2}{2m^2L^2}}.
\end{align}
Therefore, 
we arrive at \eqref{eq:alphacond}.
 \hfill \end{proof}
\subsection{Primal-dual coordinate update and its convergence metric}\label{sec:pd}

In this section we briefly review primal-dual algorithms and adapt Algorithm~\ref{alg:cyccu} and its analysis in Subsections~\ref{sec:dss} and~\ref{sec:css} {to obtain faster coordinate-update algorithms}.

Primal-dual algorithms~\cite{chambolle2011first,condat2013primal,vu2013splitting} can solve the following problem:
\begin{equation}
\Min_{x\in\cH} ~g(x)+h(x)+f(Ax),\label{pdproblem}
\end{equation}
where $g$ is a differentiable convex function; $f,h:\cH\to\RR\cup\{\infty\}$ are extended-value convex functions, which are not necessarily differentiable, and $A:\cH\to\cG$ is a linear operator from $\cH$ to another Hilbert space $\cG$. Through indicator functions, $f,h$ can model constraints like $x\in \cC$ or $Ax\in \cC$, where $\cC$ is some closed convex set. Primal-dual algorithms~involve a dual variable $s\in\cG$  and iteratively update both $x$ and $s$ by the iteration:
\begin{equation}
\left\{
\begin{array}{l}
x^{k+1}=\prox_{\eta h}(x^k-\eta(\nabla g(x^k)+A^\top s^k)),\\
s^{k+1}=\prox_{\gamma f^*} (s^k+\gamma A(2x^{k+1}-x^k)),
\end{array}
\right.\label{vucondat2}
\end{equation}
where $f^*$ is the Fenchel dual of $f$ and $\prox_{\eta h}$ is the proximal operator of the function $h$ defined as
\begin{align}
\prox_{\eta h}(x):=\argmin_{y\in\cH}\, h(y)+\frac{1}{2\eta}\|y-x\|^2.
\end{align}
Define $z:={x\choose s}$ and rewrite~\eqref{vucondat2} as $z^{k+1}=Tz^k$. It is shown~\cite{combettes2014forward} that (with proper choice of step sizes) $T$ is nonexpansive under the metric induced by the norm $\|z\|_M=\sqrt{\langle z,Mz\rangle}$, where $M$ is a certain positive definite linear operator.

To describe our coordinate-update algorithm, we assume that a product-form Hilbert space $\cG=\cG_1\times\cG_2\times\cdots \times \cG_p$ and break the dual variable $s\in\cG$ into $p$ blocks: $$s=(s_1,\,s_2,\,\ldots,s_p)$$ with $s_i\in\cG_i,~ i=1,2,...,p$. Each step of Algorithm~\ref{alg:cyccu} picks a coordinate $z_i$ of $z$, which can be a coordinate of either $x$ or $s$. {Then it} follows~\eqref{vucondat2} to updated $z_i$ only. We use the techniques in \cite[Section 4]{peng2016coordinate} to ensure such coordinate updates computationally worthy. That is, updating one coordinate of $x$ or $s$ only takes $O(\frac{1}{m+p})$ of the cost of computing the full update~\eqref{vucondat2}.

When  $g=0$ and $h=0$ (which is the case of the test problems presented in Section~\ref{sec:numerical}), $\prox_{\eta h}$ is the identity operator and, thus, we can {eliminate $x^{k+1}$ in} the $s$ update {of~\eqref{vucondat2}} and obtain the simpler iteration:
\begin{equation}
\left\{
\begin{array}{l}
x^{k+1}=x^k-\eta A^\top s^k,\\
s^{k+1}=\prox_{\gamma f^*} (s^k+\gamma Ax^{k}-2\gamma\eta AA^\top s^k),
\end{array}
\right.\label{pvucondat}
\end{equation}
where $s^{k+1}$ can be computed from $x^k$ and $s^k$.

Computing coordinate updates requires the \emph{caching} technique. In particular, for the coordinate update based on~\eqref{pvucondat}, we cache the variable $Ax^k$ when $m\gg p$. To compute $s_i^{k+1}$, we directly use $(Ax^k)_i$ instead of multiplying $A_{i,:}$ and $x^k$. When $x_i^k$ is updated to $x_i^{k+1}$, we update $Ax^k$ to $Ax^{k+1}$ by $Ax^{k+1}=Ax^k+A_{:,i}\cdot(x^{k+1}_i-x^k_i)$, which takes only $O(p)$ operations. This is cheaper than computing $Ax$ directly, which takes $O(m)$ numbers of operations. 

Our proofs in Sections~\ref{sec:dss} and~\ref{sec:css} apply to primal-dual coordinate-update algorithms after adjusting certain constants for $\|\cdot\|_M$ by the next lemma.
\begin{lemma}
Let $\lambda_{\max},\lambda_{\min}$ be the largest and smallest eigenvalues of $M$, respectively, and $\kappa:=\frac{\lambda_{\max}}{\lambda_{\min}}$ be its condition number. We have for any $z\in\cH$,
\begin{equation}
\frac{1}{\kappa^2}\|z\|^2_M\leq\sum_{i=1}^m\|z_i\|^2_M\leq\kappa^2\|z\|^2_M.
\end{equation}
\end{lemma}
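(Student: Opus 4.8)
The plan is to avoid working with $\|\cdot\|_M$ directly across the blocks — which is impossible because $M$ couples different coordinates — and instead to route the argument through the unweighted norm $\|\cdot\|$ of $\cH$, for which the coordinate blocks are orthogonal, converting back and forth by the spectral bounds on $M$. First I would record the two-sided spectral estimate: since $M$ is symmetric positive definite with spectrum contained in $[\lambda_{\min},\lambda_{\max}]$, every $w\in\cH$ satisfies
\begin{align}
\lambda_{\min}\|w\|^2\le\|w\|_M^2\le\lambda_{\max}\|w\|^2.
\end{align}
Next, writing $z_i$ for the embedding of the $i$-th coordinate into $\cH$ (zero in all other blocks), the blocks $z_1,\dots,z_m$ are mutually orthogonal in the inner product of $\cH$ and sum to $z=\sum_{i=1}^m z_i$; hence the Pythagorean identity gives the additivity $\|z\|^2=\sum_{i=1}^m\|z_i\|^2$ that the $M$-norm lacks.

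With these two ingredients the result is a short chain of inequalities. For the upper bound I would apply the upper spectral estimate to each block, then orthogonality, then the lower spectral estimate to $z$:
\begin{align}
\sum_{i=1}^m\|z_i\|_M^2\le\lambda_{\max}\sum_{i=1}^m\|z_i\|^2=\lambda_{\max}\|z\|^2\le\tfrac{\lambda_{\max}}{\lambda_{\min}}\|z\|_M^2=\kappa\|z\|_M^2.
\end{align}
The lower bound follows by the symmetric computation,
\begin{align}
\sum_{i=1}^m\|z_i\|_M^2\ge\lambda_{\min}\sum_{i=1}^m\|z_i\|^2=\lambda_{\min}\|z\|^2\ge\tfrac{\lambda_{\min}}{\lambda_{\max}}\|z\|_M^2=\tfrac{1}{\kappa}\|z\|_M^2.
\end{align}
Since $\kappa\ge 1$, this actually yields the sharper sandwich $\frac{1}{\kappa}\|z\|_M^2\le\sum_i\|z_i\|_M^2\le\kappa\|z\|_M^2$, and the claimed bound with $\kappa^2$ then follows a fortiori.

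There is no genuine obstacle here; the only point requiring care is conceptual rather than technical. One must resist the temptation to split $\|z\|_M^2$ coordinatewise, since $\langle z,Mz\rangle\neq\sum_i\langle z_i,Mz_i\rangle$ in general, and instead transfer the additivity through the unweighted norm, paying exactly one factor of $\kappa$ in each direction of conversion. The looseness of the stated constant $\kappa^2$ is harmless: it is comfortably larger than what the argument proves and is all that is needed when re-deriving the constants in the estimates of Subsections~\ref{sec:dss} and~\ref{sec:css} under the metric $\|\cdot\|_M$.
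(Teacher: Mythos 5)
Your proof is correct and follows essentially the same route as the paper: bound each block's $M$-norm by the spectral estimate, use Pythagorean additivity of the unweighted norm across blocks, and convert back to $\|\cdot\|_M$. One point worth noting: your version is actually sharper and cleaner than the paper's. The paper's one-line chain writes $\sum_i\|z_i\|_M^2\le\lambda_{\max}^2\sum_i\|z_i\|^2\le\frac{\lambda_{\max}^2}{\lambda_{\min}^2}\|z\|_M^2$, but the correct spectral bounds are $\lambda_{\min}\|w\|^2\le\|w\|_M^2\le\lambda_{\max}\|w\|^2$ with \emph{first} powers of the eigenvalues, so the paper's intermediate inequalities are only valid when $\lambda_{\max}\ge1$ and $\lambda_{\min}\le1$ (e.g., they fail for $M=\lambda I$ with $\lambda<1$). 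Your chain yields the tighter sandwich with constant $\kappa$ in each direction, and your closing observation that the stated $\kappa^2$ bound follows a fortiori from $\kappa\ge1$ is exactly the right way to reconcile the sharper estimate with the lemma as stated; it also leaves the downstream constants in Lemma~\ref{Rlip2} and the adjusted proofs of Theorems~\ref{thm:decreasingalpha2} and~\ref{thm:constantalpha} untouched.
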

\begin{proof}
\begin{align}
\sum_{i=1}^m\|z_i\|^2_M\leq&\sum_{i=1}^m\lambda^2_{\max}\|z_i\|^2=\lambda^2_{\max}\|z\|^2\leq\frac{\lambda^2_{\max}}{\lambda^2_{\min}}\|z\|^2_M=\kappa^2\|z\|^2_M.
\end{align}
The other half is similar.
\hfill\end{proof}

In particular, when $S$ is 1/2-cocoercive under the norm $\|\cdot\|_M$, we have $\|S_ix-S_iy\|_M\leq 2\kappa\|x-y\|_M$ for all $i$. {Therefore, it is easy to extend Lemma \ref{lemma:estR} as follows:}
\begin{lemma}\label{Rlip2}
Let $M$ be a symmetric positive definite matrix with condition number $\kappa$, $I-S$ be nonexpansive under the norm $\|\cdot\|_M$, $T^\alpha,E^\alpha$ and $R$ be defined as in~\eqref{eq:Talpha}, \eqref{eq:Ealpha} and~\eqref{def:R}, {respectively}, and $L$ be defined as~\eqref{def:L}. The operator $R$ satisfies the estimate
    \begin{equation}
    \|R x\|\leq \frac{\alpha L m\kappa^2}{\sqrt{2} }(1+\alpha L)^{m}\|Sx\|.\label{eq:Rlipscaled}
    \end{equation}
\end{lemma}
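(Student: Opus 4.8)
The plan is to replay the proof of Lemma~\ref{lemma:estR} essentially verbatim, but with every Euclidean norm replaced by $\|\cdot\|_M$, and to track the two places where the condition number $\kappa$ must be paid. The starting point is the same algebraic decomposition
\begin{align}
Rx=\sum_{i=2}^m\big(S_i(I-\alpha S_{i-1})\cdots(I-\alpha S_1)x-S_ix\big)=:\sum_{i=2}^m\Delta^{(i)},
\end{align}
where each summand $\Delta^{(i)}$ is supported on the $i$th coordinate block because $S_i$ outputs a vector supported there. In the Euclidean setting these disjoint supports are orthogonal, so $\|Rx\|^2=\sum_{i=2}^m\|\Delta^{(i)}\|^2$ exactly; this is precisely the identity that opens the proof of Lemma~\ref{lemma:estR}.

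Under $\|\cdot\|_M$ orthogonality fails, and this is where the first factor of $\kappa^2$ enters. Since the $\Delta^{(i)}$ are exactly the coordinate blocks of $Rx$ (with the first block empty), the norm-equivalence lemma just proved gives $\|Rx\|_M^2\le \kappa^2\sum_{i=2}^m\|\Delta^{(i)}\|_M^2$ in place of an equality. I would then estimate each $\|\Delta^{(i)}\|_M$ exactly as in Lemma~\ref{lemma:estR}: bound $\|\Delta^{(i)}\|_M\le L\,\|x-(I-\alpha S_{i-1})\cdots(I-\alpha S_1)x\|_M$ using that each $S_i$ is $L$-Lipschitz in $\|\cdot\|_M$, telescope with the triangle inequality, and apply the $M$-norm analog of Proposition~\ref{prop:lip} (each factor $I-\alpha S_j$ is $(1+\alpha L)$-Lipschitz under $\|\cdot\|_M$, hence a partial composition of $r$ such factors is $(1+\alpha L)^r$-Lipschitz). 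Using $\|x-(I-\alpha S_j)x\|_M=\alpha\|S_jx\|_M$ this yields $\|\Delta^{(i)}\|_M\le \alpha L(1+\alpha L)^m\sum_{j=1}^{i-1}\|S_jx\|_M$, identical in form to the intermediate bound in Lemma~\ref{lemma:estR}.

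The second factor of $\kappa$ appears at the Cauchy--Schwarz step. In the Euclidean proof $\sum_{j=1}^{i-1}\|S_jx\|\le\sqrt{i-1}\,\|Sx\|$ because $\sum_j\|S_jx\|^2=\|Sx\|^2$ by orthogonality; under $\|\cdot\|_M$ the norm-equivalence lemma only gives $\sum_j\|S_jx\|_M^2\le\kappa^2\|Sx\|_M^2$, so $\sum_{j=1}^{i-1}\|S_jx\|_M\le\sqrt{i-1}\,\kappa\,\|Sx\|_M$. Substituting back produces $\|\Delta^{(i)}\|_M\le \kappa\,\alpha L(1+\alpha L)^m\sqrt{i-1}\,\|Sx\|_M$, and then
\begin{align}
\|Rx\|_M^2\le\kappa^2\sum_{i=2}^m\|\Delta^{(i)}\|_M^2\le\kappa^4\,\alpha^2L^2(1+\alpha L)^{2m}\|Sx\|_M^2\sum_{i=2}^m(i-1)\le\kappa^4\,\tfrac{\alpha^2L^2m^2}{2}(1+\alpha L)^{2m}\|Sx\|_M^2,
\end{align}
whose square root is exactly~\eqref{eq:Rlipscaled}.

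I expect the main obstacle to be conceptual rather than computational: recognizing that the two uses of orthogonality in the original proof --- once to split $\|Rx\|^2$ into block contributions, and once to recombine $\sum_j\|S_jx\|^2$ into $\|Sx\|^2$ --- are precisely the two points that degrade into $\kappa^2$ inequalities under the non-diagonal metric $M$, and that these are the \emph{only} such points, so no further powers of $\kappa$ leak in. A preliminary checkable step is the $M$-norm Lipschitz bound $\|S_ix-S_iy\|_M\le 2\kappa\|x-y\|_M$ (equivalently $L\le 2\kappa$), which itself follows from one application of the norm-equivalence lemma to the vector $Sx-Sy$; I would state this first so that $L$ in~\eqref{eq:Rlipscaled} is unambiguously the $\|\cdot\|_M$-Lipschitz constant of the $S_i$.
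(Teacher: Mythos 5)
Your proposal is correct and is exactly the argument the paper intends: the paper omits the proof of Lemma~\ref{Rlip2}, asserting it is an easy extension of Lemma~\ref{lemma:estR} via the norm-equivalence lemma, and your writeup carries out that extension faithfully, correctly identifying the only two places where $\kappa$ enters --- splitting $\|Rx\|_M^2$ into block contributions and recombining $\sum_j\|S_jx\|_M^2$ into $\|Sx\|_M^2$ --- each costing one factor of $\kappa$, for the stated $\kappa^2$ total. Your preliminary remark pinning down $L$ as the $\|\cdot\|_M$-Lipschitz constant of the $S_i$ (bounded by $2\kappa$) is a sensible clarification of an ambiguity the paper leaves implicit.
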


Based on Lemma~\ref{Rlip2}, the statement of Theorem~\ref{thm:decreasingalpha2} still holds, and the proof is similar except that the choice of $\eta$ is adjusted.
Theorem~\ref{thm:constantalpha}  still holds, too, but with the step size $\alpha=O(\frac{\mu}{\kappa mL})$ and  some new constants in its proof.
\section{Numerical experiments}\label{sec:numerical}
In this section we illustrate the efficiency of Algorithm~\ref{alg:cyccu} on three different applications: $\ell_1$ based robust linear regression, computed tomography and nonnegative matrix factorization. They are important problems in statistics, medical imaging, and machine learning, respectively. The first two problems cannot be solved by the traditional coordinate descent algorithms, and the last one is a nonconvex problem.

We use these results to illustrate the following two points:
\begin{itemize}
  \item In spite of the small theoretical step sizes, practical problems in our preliminary experiments accept very large step sizes, which contribute to the great performance of our Algorithm \ref{alg:cyccu}.
  \item Our Algorithm \ref{alg:cyccu} is significantly faster than the standard fixed-point iteration, which performs the full update in each iteration, and also faster than the algorithm using randomized coordinate selection.
\end{itemize}
In all of our numerical experiments, convergence was observed with $\alpha_k=1$ in Algorithm~\ref{alg:cyccu}. In addition, Algorithm \ref{alg:cyccu} and its randomized variant both admit larger \emph{intrinsic} step sizes, which are $\eta,\gamma$ in~\eqref{pvucondat}. This brings a significant speed advantage to the coordinate update algorithms over the standard fixed-point iteration. 

Our numerical experiments are coded in Matlab that is  running on a laptop with 2.7 GHz Intel Core
i5 and 8 Gigabytes of RAM. 
\subsection{$\ell_1$ based robust linear regression}
Consider the problem:
\begin{align}
\Min_{x\in\RR^m} \,f(Ax):=\|Ax-b\|_1,\label{rls}
\end{align}
where $A\in\RR^{n\times m}$ and $b\in\RR^{n}$ are given. We apply Algorithm \ref{alg:cyccu} with diagonal scaling~\cite{pock2011diagonal} to solve~\eqref{rls}. Specifically, the fixed-point iteration is:
\begin{subequations}\label{rls:iter}
\begin{align}
x^{k+1}&=x^k-H A^\top s^k,\label{rls:x}\\
s^{k+1}&=\prox_{\Gamma f^*} (s^k+\Gamma A(2x^{k+1}-x^k)),\label{rls:s}
\end{align}
\end{subequations}
where $f^*(y)=\iota_{\|\cdot\|_\infty\leq 1}(y)+y^\top b$ \footnote{$\iota_C$ is the indicator function: $\iota_C(x)=0$ if $x\in C$ and $=\infty$ if $x\not\in C$.} and $H,\Gamma$ are diagonally scaling matrices with $H_{ii}=\frac{1}{\|A_{:,i}\|_1},\Gamma_{ii}=\frac{1}{\|A_{i,:}\|_1}$.
Furthermore, we have
\begin{align}
\prox_{\Gamma f^*}(y):=\argmin_t f^*(t)+\frac{1}{2}\|t-y\|^2_{\Gamma^{-1}}=\Proj_{\|\cdot\|_\infty\leq 1}(y-\Gamma b).\label{eq:proxf}
\end{align}
Here, $y=\Proj_{\|\cdot\|_\infty\leq 1}(x)$ can be computed component-wise as $y_i=\Proj_{[-1,1]}(x_i)=\max\{-1,\min\{1,x_i\}\}$, $i=1,\ldots,n$.
Substituting the $x$ update~\eqref{rls:x} into the $s$ update~\eqref{rls:s} and using~\eqref{eq:proxf}, we can rewrite~\eqref{rls:iter} as
\begin{subequations}\label{rls:iter2}
\noeqref{rls:x2,rls:s2}
\begin{align}
x^{k+1}&=x^k-H A^\top s^k,\label{rls:x2}\\
s^{k+1}&=\Proj_{\|\cdot\|_\infty\leq 1} (s^k-\Gamma b+\Gamma A(x^{k}-2H A^\top s^k)),\label{rls:s2}
\end{align}
\end{subequations}
which is more suitable for coordinate update.

We define $z:=\begin{pmatrix}
x\\
s\\
\end{pmatrix}$ and write \eqref{rls:iter2} as $z^{k+1}:=Tz^k$. To this operator $T$ and $S=I-T$ do we apply Algorithm~\ref{alg:cyccu}. 


In our experiments, we let $n=500$ and $m=100$. The elements of $A$ and $b$ are sampled from the standard normal distribution. The solution $x^*$ and the optimal function value $f^*$ are obtained by solving Problem~\eqref{rls} using CVX. The starting point $z^0={x^0\choose s^0}$ is set to be $0$ . The block size is  1.
\begin{figure}[!htb]\centering
\includegraphics[width=0.8\linewidth]{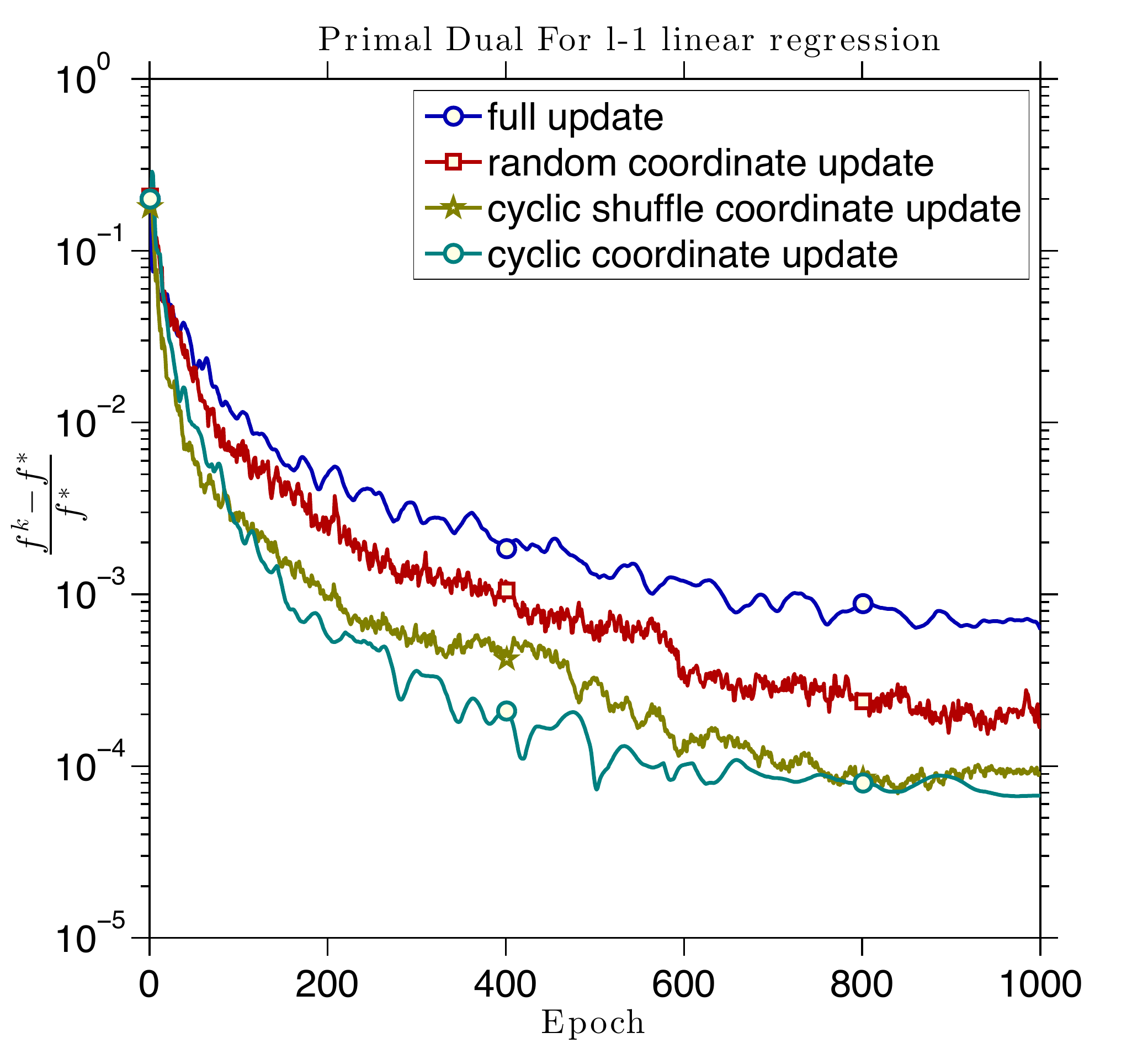}
\caption{$\ell_1$ based robust least squares}
\label{fig:l1}
\end{figure}

Since Problem~\eqref{rls} may have multiple solutions, we use $\frac{f^k-f^*}{f^*}$ to measure convergence. To fully explore the power of each algorithm, we multiply both step size matrices $\Gamma$ and $H$ in~\eqref{rls:iter2} by a scaling factor $\nu$. 
We set $\nu=6$ for the full update (a larger value leads to divergence) and $\nu=12$ for the coordinate updates. Figure~\ref{fig:l1} plots $\frac{f^k-f^*}{f^*}$ versus epoch. We can see that cyclic and shuffled cyclic algorithms perform better than the random algorithm, which is further faster than the full update algorithm.
\subsection{Computed tomography (CT)}
Consider the image recovery problem:
\begin{equation}
\Min_{x\in\RR^m}~\lambda \|\nabla x\|_1+\frac{1}{2}\|Ax-b\|^2,\label{eq:CT}
\end{equation}
where $x\in\RR^m$ is the unknown two-dimensional image  (reformulated as a vector), $\nabla\in\RR^{n_1\times m}$ is the finite difference operator, $\lambda$ is a scaling factor, $A\in\RR^{n_2\times m}$ is the Radon transform matrix, and $b\in\RR^{n_2}$ is the observed CT data, which is contaminated by random noise. Let $n:=n_1+n_2$.

By defining
\begin{equation}
B:={\nabla \choose A},\qquad f(p,q):=\lambda\|p\|_1+\frac{1}{2}\|q-b\|^2, \text{ for } p\in\RR^{n_1},q\in\RR^{n_2},
\end{equation}
we can rewrite ~\eqref{eq:CT}  as $$\Min_{x\in\RR^m} f(Bx).$$
To solve this problem, we apply  Algorithm~\ref{alg:cyccu} to the fixed-point iteration (see \cite[section 5.2.2]{peng2016coordinate} for its derivation):
%
\begin{subequations}\label{eqn:pd_tvl2}
\begin{align}
x^{k + 1} &= x^k - \eta (\nabla^\top s^k + A^\top  t^k), \\
s^{k + 1} &= \Proj_{\|\cdot\|_{\infty} \leq \lambda} \left(s^k + \gamma \nabla (x^k - 2\eta (\nabla^\top s^k + A^\top t^k))\right), \\
t^{k+1} &= \frac{1}{1 + \gamma} \left(t^k + \gamma A (x^k - 2 \eta (\nabla^\top s^k + A^\top t^k)) - \gamma b \right).
\end{align}
\end{subequations}
%
We implement Algorithm~\ref{alg:cyccu} with $\alpha_k=1$ 
and compare it to the full update and random coordinate selection.

We generate a thorax phantom of size $284\times 284$. The Radon matrix $A$ is generated by Siddon's algorithm~\cite{siddon1985fast}. 

The image $x$ is partitioned into $284$ blocks, with each block corresponding to a column of the image. The dual variables $s, t$ are also partitioned into 284 blocks accordingly. A
block of $x$ and the corresponding blocks of $s$ and $t$ are bundled together as
a single block. In each iteration, a bundled block of $x,s,t$ is chosen and updated.

The step sizes $\eta$ and $\gamma$ are hand tuned for both the full and coordinate updates. The different rules of coordinate selection use the same step size.
\begin{figure}[!htb]
\begin{subfigure}{0.5\linewidth}
        \centering
        \includegraphics[width=1\linewidth]{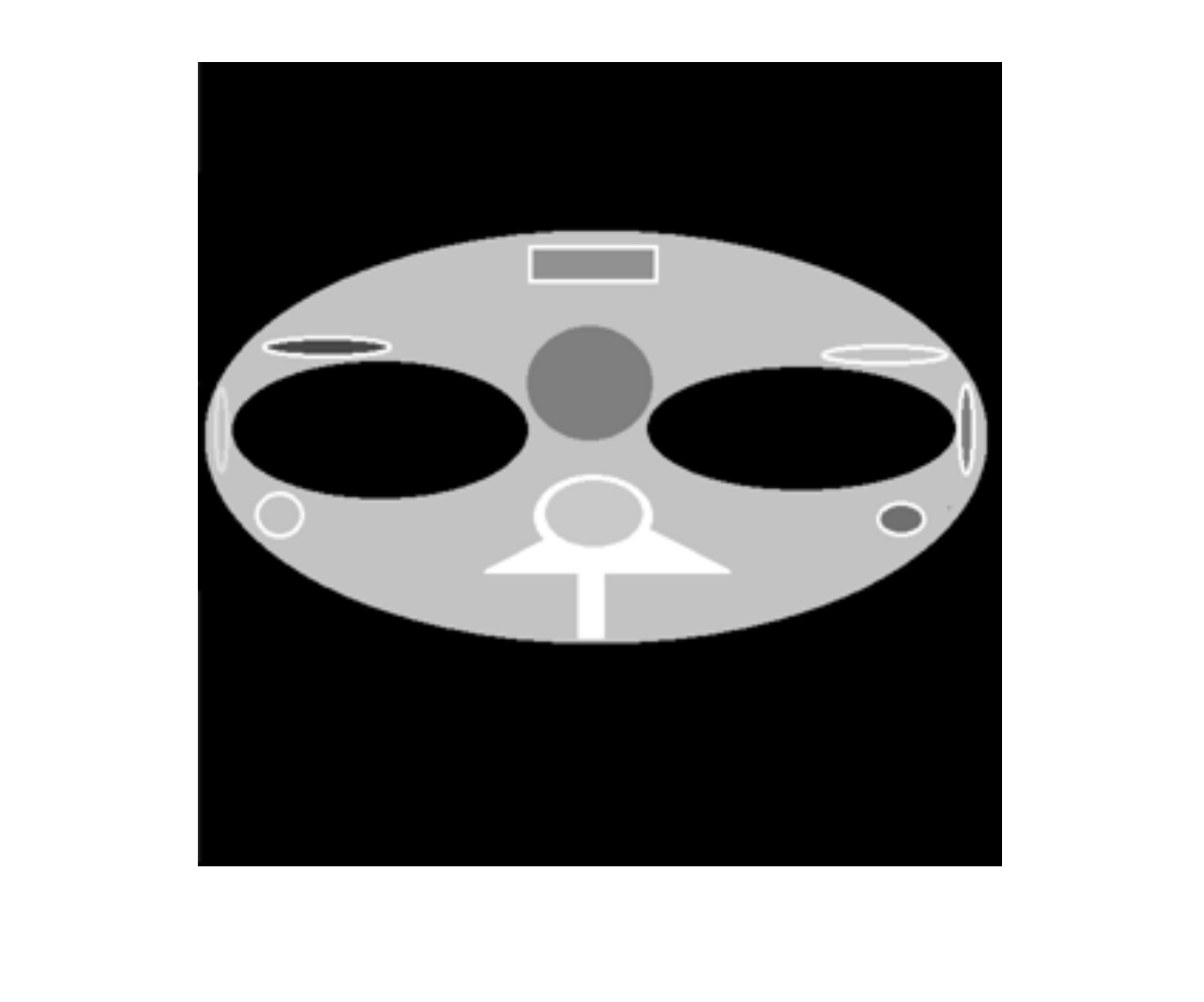}
         \caption{Phantom true image}\label{fig:pds_a}
    \end{subfigure} %
    \quad
    \begin{subfigure}{0.5\linewidth}
        \centering
        \includegraphics[width=1\linewidth]{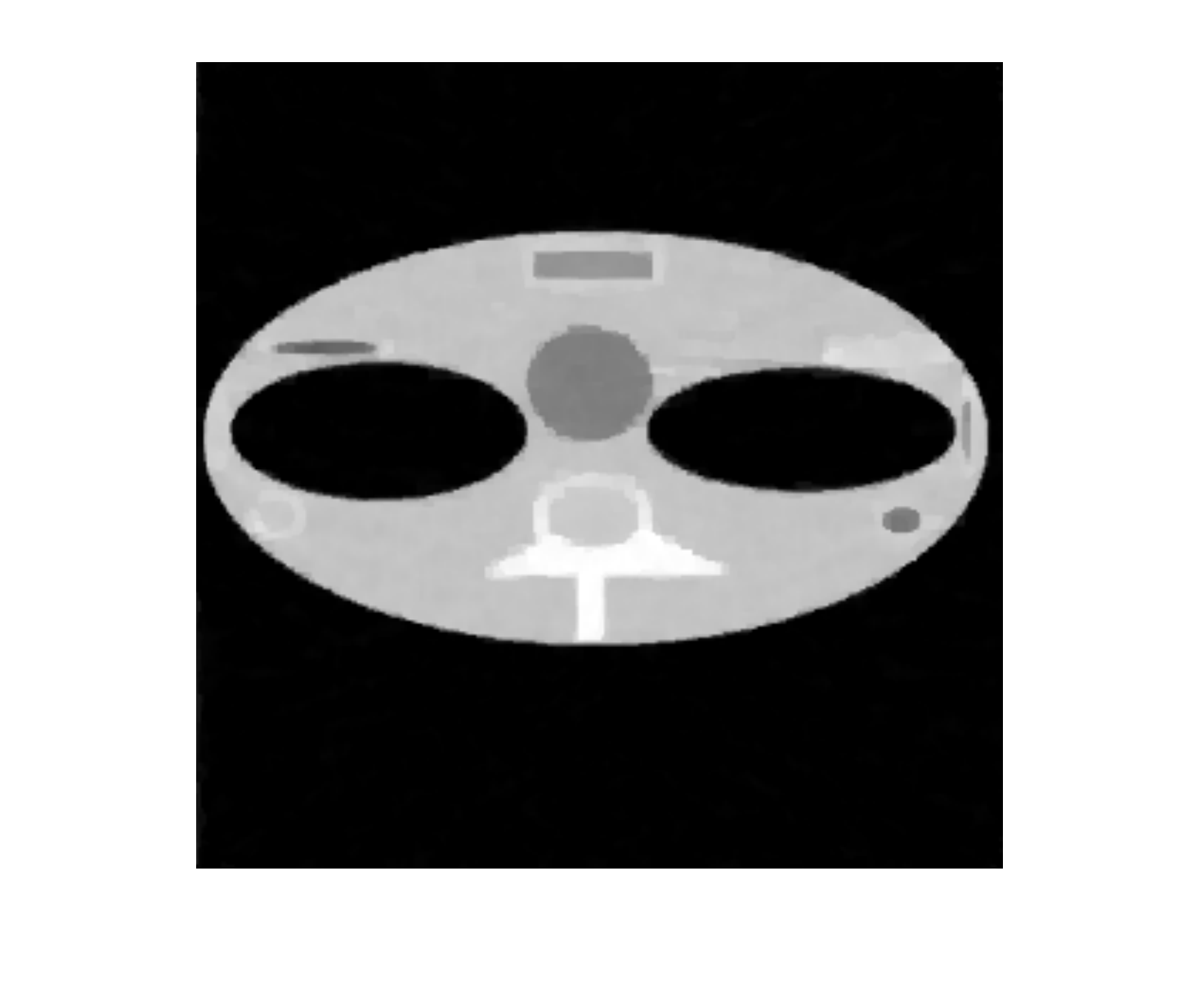}
         \caption{CT reconstruction of (a) by full update}\label{fig:pds_b}
    \end{subfigure} %
    \begin{subfigure}[t]{0.5\linewidth}
        \centering
        \includegraphics[width=1\linewidth]{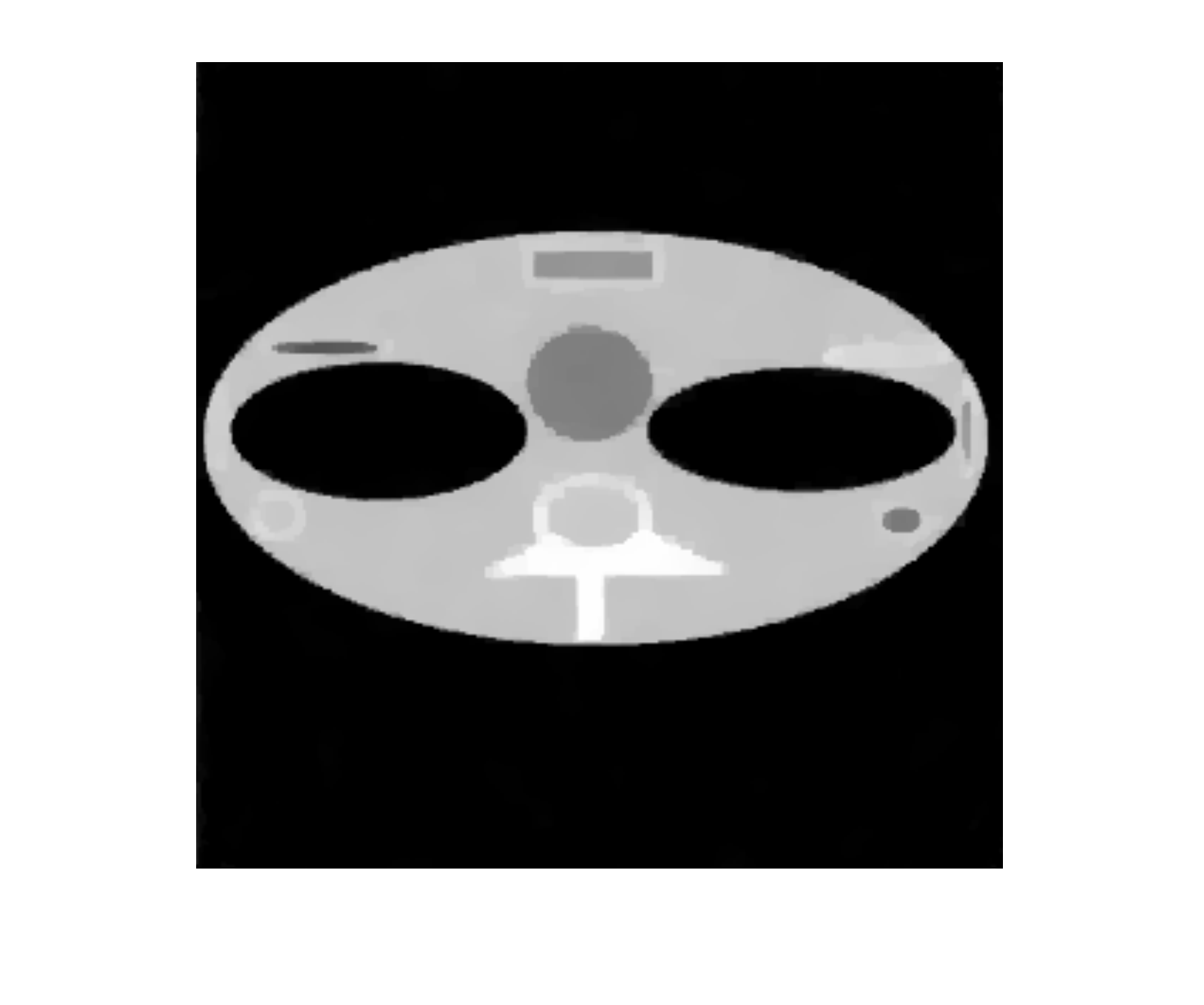}
         \caption{CT reconstruction of (a) by cyclic coordinate update}\label{fig:pds_c}
    \end{subfigure} %
    \quad
    \begin{subfigure}[t]{0.5\linewidth}
        \centering
        \includegraphics[width=1\linewidth]{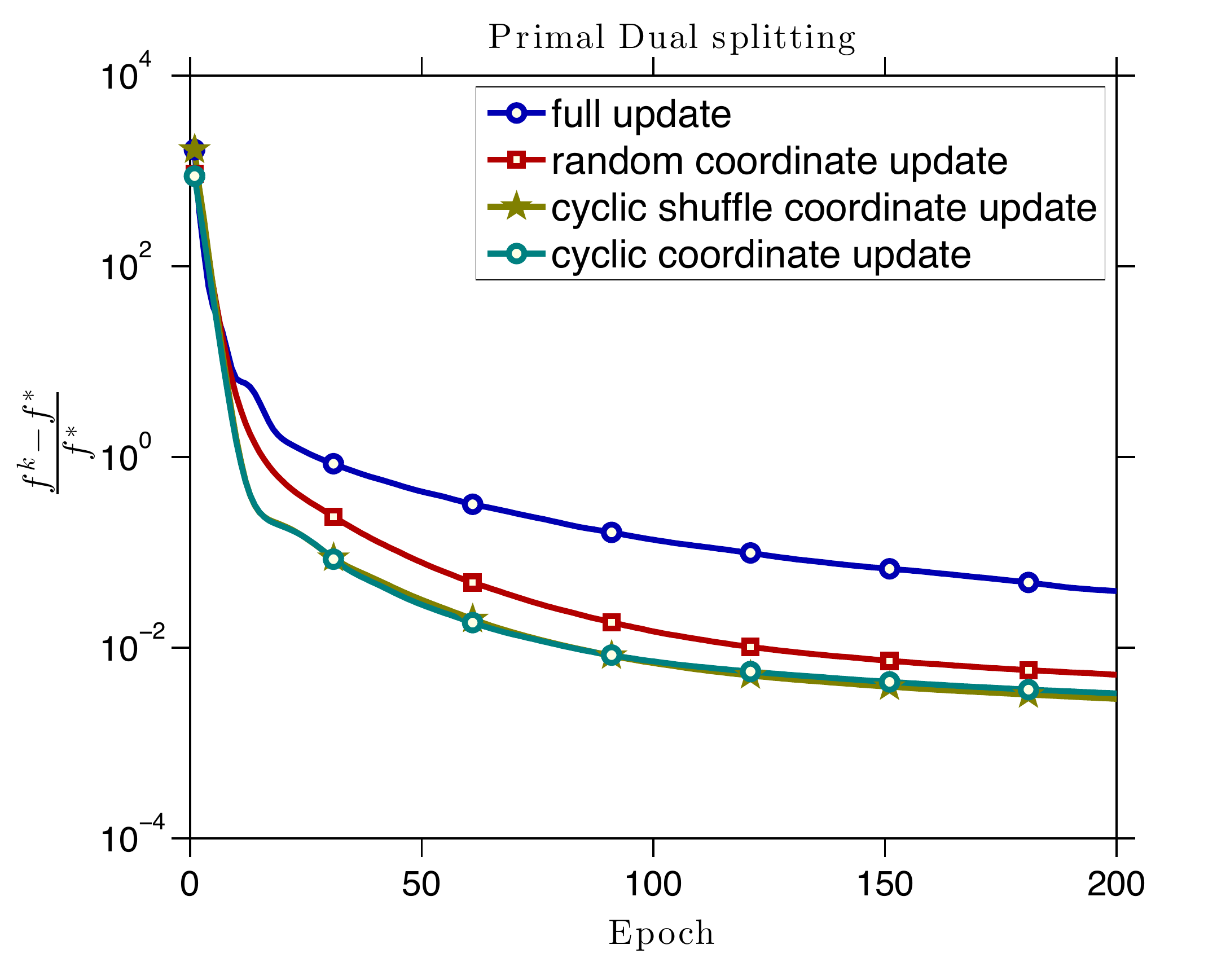}
         \caption{Objective function value}\label{fig:pds_d}
    \end{subfigure} %
    \caption{CT image reconstruction.}
    \label{fig:pds_results}
\end{figure}

Figure~\ref{fig:pds_results} depicts the results. After 200 epochs, the cyclic algorithm recovers the original image~(Figure \ref{fig:pds_a}) much better than the full update algorithm. Compare Figures~\ref{fig:pds_c} and~\ref{fig:pds_b}. 
We use the TVAL3 package\footnote{Accessed from \url{http://www.caam.rice.edu/~optimization/L1/TVAL3/} on Oct. 26, 2016.} \cite{LiYinJiangZhang2013_efficient} to obtain a high-accuracy objective value $f^*$ of~\eqref{eq:CT} and plot $\frac{f^k-f^*}{f^*}$ versus epoch in Figure~\ref{fig:pds_d}. We can see that the cyclic or cyclic shuffle versions of Algorithm~\ref{alg:cyccu} have similar performance, and they are faster than  random selection. All the three coordinate algorithms are faster than the full update algorithm.
\subsection{Nonnegative matrix factorization}

{
In this subsection, we apply Algorithm~\ref{alg:cyccu} to the nonnegative matrix factorization (NMF) problem.
NMF has attracted a great deal of attention in the last decade because it can be used to extract principal components, features, structures, 
or similarities from a large set of data or image.
Many algorithms have been developed to solve NMF (cf. \cite{history}), e.g.
multiplicative updates \cite{conv,mu1,conv2, o19},
alternating least squares \cite{fast1,fast2,fast3},
alternating non-negative least squares \cite{fast,pg,newton}, 
and block coordinate descent methods \cite{xu2013block,xu2014globally}.
}

Now let us consider the following NMF problem:
\begin{equation}
\Min_{X\in\RR_+^{n\times r},Y\in\RR_+^{m\times r}}~\frac{1}{2}\|XY^\top-M\|_F^2,
\end{equation}
where $0<r\ll \min(m,n)$ and $M\in\RR_+^{n\times m}$ are given. This problem is nonconvex.
Since the objective function is biconvex (convex in $X$ while $Y$ is fixed, and vice versa), we can apply the alternating projected gradient iteration, which we treat as a fixed-point iteration  $(X^{k+1},Y^{k+1})=T(X^k,Y^k)$:
\begin{equation}\label{eq:APG}
\begin{cases}
X^{k+1}=\max(0,X^k-\alpha_k\nabla_X f(X^k,Y^k)),\\
Y^{k+1}=\max(0,Y^k-\beta_k\nabla_Y f(X^{k+1},Y^k),
\end{cases}
\end{equation}
where $\alpha_k,\beta_k>0$ are step sizes and
\begin{equation}
\begin{cases}
\nabla_X f(X,Y)=(XY^\top-M)Y,\\
\nabla_Y f(X,Y)=(YX^\top-M^\top)X.\\
\end{cases}
\end{equation}
We partition $X,Y$ into columns $X=[X_1\quad\cdots\quad X_r]$ and $Y=[Y_1\quad\cdots\quad Y_r]$ and apply the following coordinate update to~\eqref{eq:APG} (notation: $X_{<i}:=[X_1\quad\cdots\quad X_{i-1}]$):
\begin{equation}\label{RRI}
\begin{cases}
X_i^{k+1}=\argmin_{X_i\geq 0}\frac{1}{2}\|X_i(Y_i^k)^\top+X_{<i}^{k+1}(Y_{<i}^{k+1})^\top+X_{>i}^k(Y_{>i}^k)^\top-M\|^2_F,\\
Y_i^{k+1}=\argmin_{Y_i\geq 0}\frac{1}{2}\|X^{k+1}_iY_i^\top+X_{<i}^{k+1}(Y_{<i}^{k+1})^\top+X_{>i}^k(Y_{>i}^k)^\top-M\|^2_F,
\end{cases}
\end{equation}
which appears in the recent work~\cite{ho2011descent,xu2014globally}. 

Each problem in~\eqref{RRI} has closed form solutions as follows:
\begin{align}\label{eq:RRI2}
\begin{cases}
X_i^{k+1}=\Proj_{\RR_+^n}\left[X_i^k-\frac{1}{L_{X_i}^k}\nabla_{X_i} f(X_{<i}^{k+1},X_{\geq i}^k,Y_{<i}^{k+1},Y_{\geq i}^k)\right],\\
Y_i^{k+1}=\Proj_{\RR_+^m}\left[Y_i^k-\frac{1}{L_{Y_i}^k}\nabla_{Y_i} f(X_{\leq i}^{k+1},X_{>i}^k,Y_{<i}^{k+1},Y_{\geq i}^k)\right],
\end{cases}
\end{align}
where $\nabla_{X_i} f(X,Y)=(XY^\top-M)Y_i,\nabla_{Y_i} f(X,Y)=(YX^\top-M^\top)X_i$, and $L_{X_i}^k={\|Y_i^k\|^2_2}, L_{Y_i}^k={\|X_i^{k+1}\|^2_2}$ are their corresponding Lipschitz constants. 

An issue of ~\eqref{eq:RRI2} is that $X_i^{k+1},Y_i^{k+1}$ can potentially equal  zero. Hence, we make two modifications. Firstly, we force each column of $X$ to have unit length (notice $XY^\top=(XU)(YU^{-1})^\top$ for any matrix $U\in\RR_+^{r\times r}$); secondly, we redefine $L^k_{X_i}$ as $L^k_{X_i}=\max(L_{\min},{\|Y_i^k\|^2_2})$. Consequently~\eqref{eq:RRI2} is modified to the following:
\begin{equation}
\begin{cases}
X_i^{k+1}=\Proj_{\RR_+^n\cap S^{n-1}}\left[X_i^k-\frac{1}{\min(L_{\min},{\|Y_i^k\|^2_2})}\nabla_{X_i} f(X_{<i}^{k+1},X_{\geq i}^k,Y_{<i}^{k+1},Y_{\geq i}^k)\right],\\
Y_i^{k+1}=\Proj_{\RR_+^m}\left[Y_i^k-\nabla_{Y_i} f(X_{\leq i}^{k+1},X_{>i}^k,Y_{<i}^{k+1},Y_{\geq i}^k)\right],
\end{cases}\label{ModRRI}
\end{equation}
which can still be written in the closed form; see~\cite[Appendix B]{xu2014globally}.

We implement the above coordinate update with the random, cyclic and cyclic shuffle coordinate selection rules and compare their performance with~\eqref{eq:APG}.

In our experiments, we set $n=400, m=400, q=20$ and generate $M=LR+N_o$, where the elements of $L$ and $R$ are sampled from the standard normal distribution then thresholded positively. The random noise $N_o\in\RR^{n\times m}$ is generated in the same way and scaled such that $\|N_o\|_F=10^{-3}\|LR\|_F$. The constant $L_{\min}$ is set to $0.001$. The step sizes in~\eqref{eq:APG} are set as $\alpha_k=\frac{1}{\|(Y^k)^\top Y^k\|_2},\beta_k=\frac{1}{\|(X^{k+1})^\top X^{k+1}\|_2}$.

\begin{figure}[!htb]
\centering
\begin{subfigure}[t]{0.48\linewidth}\centering
\includegraphics[height=5.5cm,width=1\linewidth]{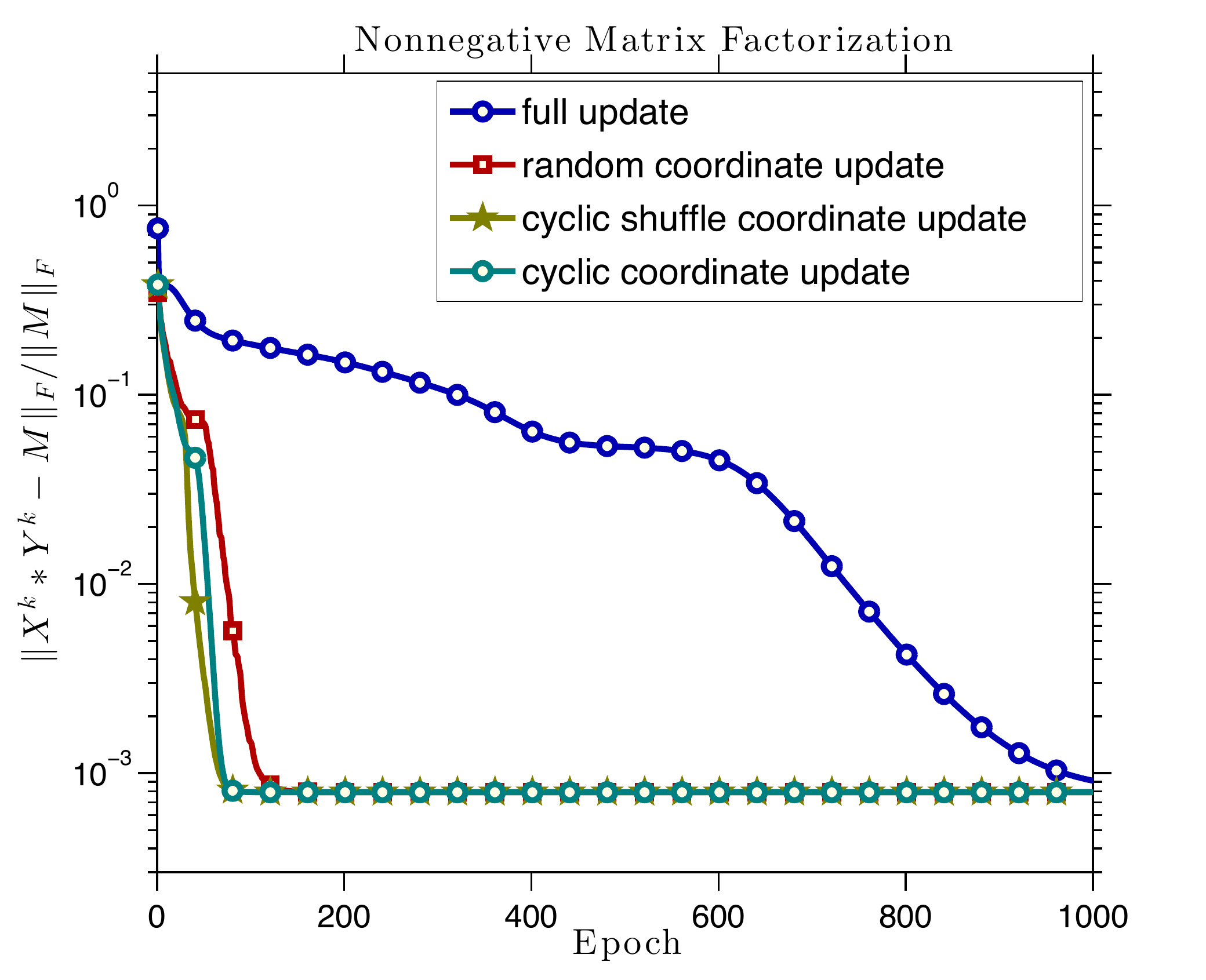}
\caption{Relative residue}
\label{fig:NMF}
\end{subfigure}
\begin{subfigure}[t]{0.48\linewidth}\centering
\includegraphics[height=5.5cm, width=1\linewidth]{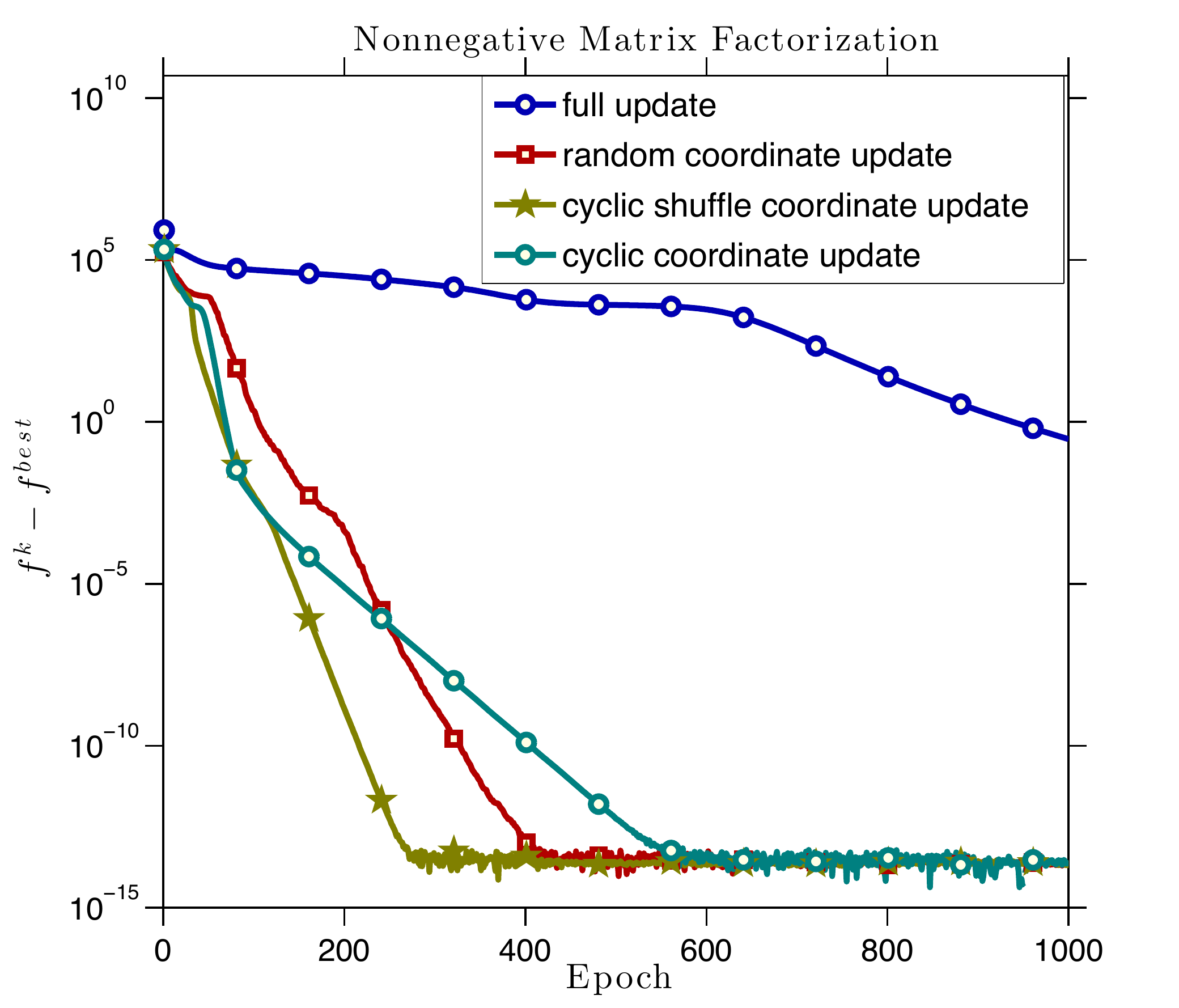}
\caption{Objective function value}
\label{fig:NMF2}
\end{subfigure}
\caption{Nonnegative matrix factorization}
\end{figure}

Figure~\ref{fig:NMF} plots the relative residue $\frac{\|X^kY^k-M\|_F}{\|M\|_F}$ versus epoch. To better compare the algorithms, we record the smallest function value $f^\text{best}$ achieved by running all algorithms for 4000 epochs and plot $f^k-f^\text{best}$ versus epoch in Figure~\ref{fig:NMF2}. The comparison results are similar, except that cyclic update becomes slightly slower than the other two after reaching a medium accuracy. 
\section{Conclusion}\label{sec:con}
We have proposed {a unified cyclic coordinate-update algorithm} for the nonexpansive fixed-point problem, established its convergence with slowly decreasing step sizes and, under a stronger condition, {its convergence} with a fixed step size. Numerical results illustrate the higher efficiency of the proposed algorithms over the traditional fixed-point iteration. Also, cyclic coordinate selection is overall more efficient than  random coordinate selection.

\section*{Acknowledgements}
We thank Robert Hannah, Zhimin Peng, Yangyang Xu, and Ming Yan for motivating discussions, as well as Qing Ling for his comments. We also thank Zhimin Peng for sharing his CT code.

\bibliographystyle{siamplain}
\bibliography{cyclic}

\begin{thebibliography}{10}

\bibitem{BauschkeCombettes2011}
{\sc H.~H. Bauschke and P.~L. Combettes}, {\em Convex {{Analysis}} and
  {{Monotone Operator Theory}} in {{Hilbert Spaces}}}, CMS Books in
  Mathematics, {Springer New York}, New York, NY, 2011.

\bibitem{beck2013convergence}
{\sc A.~Beck and L.~Tetruashvili}, {\em On the convergence of block coordinate
  descent type methods}, SIAM Journal on Optimization, 23 (2013),
  pp.~2037--2060.

\bibitem{bertsekas1983distributed}
{\sc D.~P. Bertsekas}, {\em Distributed asynchronous computation of fixed
  points}, Mathematical Programming, 27 (1983), pp.~107--120.

\bibitem{bertsekas1989parallel}
{\sc D.~P. Bertsekas and J.~N. Tsitsiklis}, {\em Parallel and distributed
  computation: numerical methods}, Prentice hall Englewood Cliffs, NJ, 1989.

\bibitem{chambolle2011first}
{\sc A.~Chambolle and T.~Pock}, {\em A first-order primal-dual algorithm for
  convex problems with applications to imaging}, Journal of Mathematical
  Imaging and Vision, 40 (2011), pp.~120--145.

\bibitem{chen2015extended}
{\sc C.~Chen, M.~Li, X.~Liu, and Y.~Ye}, {\em Extended admm and bcd for
  nonseparable convex minimization models with quadratic coupling terms:
  Convergence analysis and insights}, arXiv preprint arXiv:1508.00193,  (2015).

\bibitem{conv}
{\sc E.~C. Chi and T.~G. Kolda}, {\em On tensors, sparsity, and nonnegative
  factorizations}, SIAM Journal on Matrix Analysis and Applications, 33 (2012),
  pp.~1272--1299.

\bibitem{combettes2014forward}
{\sc P.~L. Combettes, L.~Condat, J.-C. Pesquet, and B.~C. Vu}, {\em A
  forward-backward view of some primal-dual optimization methods in image
  recovery}, in Proceedings of the 2014 IEEE International Conference on Image
  Processing (ICIP), 2014, pp.~4141--4145.

\bibitem{combettes2015stochastic}
{\sc P.~L. Combettes and J.-C. Pesquet}, {\em Stochastic quasi-fej{\'e}r
  block-coordinate fixed point iterations with random sweeping}, SIAM Journal
  on Optimization, 25 (2015), pp.~1221--1248.

\bibitem{condat2013primal}
{\sc L.~Condat}, {\em A primal--dual splitting method for convex optimization
  involving {L}ipschitzian, proximable and linear composite terms}, Journal of
  Optimization Theory and Applications, 158 (2013), pp.~460--479.

\bibitem{da2010cone}
{\sc A.~P. Da~Costa and A.~Seeger}, {\em Cone-constrained eigenvalue problems:
  theory and algorithms}, Computational Optimization and Applications, 45
  (2010), pp.~25--57.

\bibitem{mu1}
{\sc M.~E. Daube-Witherspoon and G.~Muehllehner}, {\em An iterative image space
  reconstruction algorthm suitable for volume ect}, IEEE transactions on
  medical imaging, 5 (1986), pp.~61--66.

\bibitem{DavisYin2015_threeoperator}
{\sc D.~Davis and W.~Yin}, {\em A three-operator splitting scheme and its
  optimization applications}, UCLA CAM Report 15-13,  (2015).

\bibitem{davis2014convergence}
{\sc D.~Davis and W.~Yin}, {\em Convergence rate analysis of several splitting
  schemes}, in Splitting {{Methods}} in {{Communication}}, {{Imaging}},
  {{Science}} and {{Engineering}}, R.~Glowinski, S.~Osher, and W.~Yin, eds.,
  Chapter 4, {Springer}, 2016.

\bibitem{fercoq2015coordinate}
{\sc O.~Fercoq and P.~Bianchi}, {\em A coordinate descent primal-dual algorithm
  with large step size and possibly non separable functions}, arXiv preprint
  arXiv:1508.04625,  (2015).

\bibitem{friedman2007pathwise}
{\sc J.~Friedman, T.~Hastie, H.~H{\"o}fling, R.~Tibshirani, et~al.}, {\em
  Pathwise coordinate optimization}, The Annals of Applied Statistics, 1
  (2007), pp.~302--332.

\bibitem{friedman2010regularization}
{\sc J.~Friedman, T.~Hastie, and R.~Tibshirani}, {\em Regularization paths for
  generalized linear models via coordinate descent}, Journal of statistical
  software, 33 (2010), p.~1.

\bibitem{GabayMercier1976_dual}
{\sc D.~Gabay and B.~Mercier}, {\em A dual algorithm for the solution of
  nonlinear variational problems via finite element approximation}, Computers
  \& Mathematics with Applications, 2 (1976), pp.~17--40.

\bibitem{history}
{\sc N.~Gillis}, {\em The why and how of nonnegative matrix factorization}, in
  Regularization, Optimization, Kernels, and Support Vector Machines, Chapman
  and Hall/CRC, 2014, pp.~257--291.

\bibitem{GlowinskiMarroco1975_LapproximationPar}
{\sc R.~Glowinski and A.~Marroco}, {\em Sur l'approximation, par
  \'{e}l\'{e}ments finis d'ordre un, et la r\'{e}solution, par
  p\'{e}nalisation-dualit\'{e} d'une classe de probl\`{e}mes de {Dirichlet} non
  lin\'{e}aires}, ESAIM: Mathematical Modelling and Numerical Analysis, 9
  (1975), pp.~41--76.

\bibitem{fast}
{\sc N.~Guan, D.~Tao, Z.~Luo, and B.~Yuan}, {\em Nenmf: an optimal gradient
  method for nonnegative matrix factorization}, IEEE Transactions on Signal
  Processing, 60 (2012), pp.~2882--2898.

\bibitem{conv2}
{\sc J.~Han, L.~Han, M.~Neumann, and U.~Prasad}, {\em On the rate of
  convergence of the image space reconstruction algorithm}, Operators and
  matrices, 3 (2009), pp.~41--58.

\bibitem{hanke1995convergence}
{\sc M.~Hanke, A.~Neubauer, and O.~Scherzer}, {\em A convergence analysis of
  the landweber iteration for nonlinear ill-posed problems}, Numerische
  Mathematik, 72 (1995), pp.~21--37.

\bibitem{ho2011descent}
{\sc N.-D. Ho, P.~Van~Dooren, and V.~D. Blondel}, {\em Descent methods for
  nonnegative matrix factorization}, in Numerical Linear Algebra in Signals,
  Systems and Control, Springer, 2011, pp.~251--293.

\bibitem{hong2013iteration}
{\sc M.~Hong, X.~Wang, M.~Razaviyayn, and Z.-Q. Luo}, {\em Iteration complexity
  analysis of block coordinate descent methods}, Mathematical Programming,
  (2013), pp.~1--30.

\bibitem{iusem2010proximal}
{\sc A.~N. Iusem and W.~Sosa}, {\em On the proximal point method for
  equilibrium problems in hilbert spaces}, Optimization, 59 (2010),
  pp.~1259--1274.

\bibitem{fast1}
{\sc H.~Kim and H.~Park}, {\em Sparse non-negative matrix factorizations via
  alternating non-negativity-constrained least squares for microarray data
  analysis}, Bioinformatics, 23 (2007), pp.~1495--1502.

\bibitem{fast2}
{\sc J.~Kim, Y.~He, and H.~Park}, {\em Algorithms for nonnegative matrix and
  tensor factorizations: A unified view based on block coordinate descent
  framework}, Journal of Global Optimization, 58 (2014), pp.~285--319.

\bibitem{fast3}
{\sc J.~Kim and H.~Park}, {\em Fast nonnegative matrix factorization: An
  active-set-like method and comparisons}, SIAM Journal on Scientific
  Computing, 33 (2011), pp.~3261--3281.

\bibitem{krasnosel1955two}
{\sc M.~Krasnosel'skii}, {\em Two remarks on the method of successive
  approximations}, Uspekhi Matematicheskikh Nauk, 10 (1955), pp.~123--127.

\bibitem{o19}
{\sc D.~D. Lee and H.~S. Seung}, {\em Algorithms for non-negative matrix
  factorization}, in Advances in neural information processing systems, 2001,
  pp.~556--562.

\bibitem{LiYinJiangZhang2013_efficient}
{\sc C.~Li, W.~Yin, H.~Jiang, and Y.~Zhang}, {\em An efficient augmented
  {{Lagrangian}} method with applications to total variation minimization},
  Computational Optimization and Applications, 56 (2013), pp.~507--530.

\bibitem{pg}
{\sc C.-J. Lin}, {\em Projected gradient methods for nonnegative matrix
  factorization}, Neural computation, 19 (2007), pp.~2756--2779.

\bibitem{LionsMercier1979_SplittingAlgorithms}
{\sc P.~L. Lions and B.~Mercier}, {\em Splitting {{Algorithms}} for the {{Sum}}
  of {{Two Nonlinear Operators}}}, SIAM Journal on Numerical Analysis, 16
  (1979), pp.~964--979.

\bibitem{Lu_Xiao_rbcd_2015}
{\sc Z.~Lu and L.~Xiao}, {\em {On the complexity analysis of randomized
  block-coordinate descent methods}}, Mathematical Programming, 152 (2015),
  pp.~615--642.

\bibitem{luo1992convergence}
{\sc Z.-Q. Luo and P.~Tseng}, {\em On the convergence of the coordinate descent
  method for convex differentiable minimization}, Journal of Optimization
  Theory and Applications, 72 (1992), pp.~7--35.

\bibitem{mann1953mean}
{\sc W.~R. Mann}, {\em Mean value methods in iteration}, Proceedings of the
  American Mathematical Society, 4 (1953), pp.~506--510.

\bibitem{nesterov2012rcd}
{\sc Y.~Nesterov}, {\em Efficiency of coordinate descent methods on huge-scale
  optimization problems}, SIAM Journal on Optimization, 22 (2012),
  pp.~341--362.

\bibitem{nong2009parameter}
{\sc R.~Nong and D.~C. Sorensen}, {\em A parameter free {ADI}-like method for
  the numerical solution of large scale lyapunov equations}, Computational and
  Applied Mathematics,  (2009).

\bibitem{peng2016coordinate}
{\sc Z.~Peng, T.~Wu, Y.~Xu, M.~Yan, and W.~Yin}, {\em Coordinate friendly
  structures, algorithms and applications}, Annals of Mathematical Sciences and
  Applications, 1 (2016), pp.~57--119.

\bibitem{Peng_2015_AROCK}
{\sc Z.~{Peng}, Y.~{Xu}, M.~{Yan}, and W.~{Yin}}, {\em {ARock: an algorithmic
  framework for asynchronous parallel coordinate updates}}, SIAM Journal on
  Scientific Computing, 38 (2016), pp.~A2851--A2879.

\bibitem{pesquet2014class}
{\sc J.-C. Pesquet and A.~Repetti}, {\em A class of randomized primal-dual
  algorithms for distributed optimization}, Journal of Nonlinear and Convex
  Analysis, 16 (2015), pp.~2453--2490.

\bibitem{pock2011diagonal}
{\sc T.~Pock and A.~Chambolle}, {\em Diagonal preconditioning for first order
  primal-dual algorithms in convex optimization}, in 2011 International
  Conference on Computer Vision, IEEE, 2011, pp.~1762--1769.

\bibitem{razaviyayn2013unified}
{\sc M.~Razaviyayn, M.~Hong, and Z.-Q. Luo}, {\em A unified convergence
  analysis of block successive minimization methods for nonsmooth
  optimization}, SIAM Journal on Optimization, 23 (2013), pp.~1126--1153.

\bibitem{richtarik2014iteration}
{\sc P.~Richt{\'a}rik and M.~Tak{\'a}{\v{c}}}, {\em Iteration complexity of
  randomized block-coordinate descent methods for minimizing a composite
  function}, Mathematical Programming, 144 (2014), pp.~1--38.

\bibitem{shi2016primer}
{\sc H.-J.~M. Shi, S.~Tu, Y.~Xu, and W.~Yin}, {\em A primer on coordinate
  descent algorithms}, arXiv preprint arXiv:1610.00040,  (2016).

\bibitem{siddon1985fast}
{\sc R.~L. Siddon}, {\em Fast calculation of the exact radiological path for a
  three-dimensional ct array}, Medical physics, 12 (1985), pp.~252--255.

\bibitem{tseng2001convergence}
{\sc P.~Tseng}, {\em Convergence of a block coordinate descent method for
  nondifferentiable minimization}, Journal of optimization theory and
  applications, 109 (2001), pp.~475--494.

\bibitem{vu2013splitting}
{\sc B.~C. V{\~u}}, {\em A splitting algorithm for dual monotone inclusions
  involving cocoercive operators}, Advances in Computational Mathematics, 38
  (2013), pp.~667--681.

\bibitem{warga1963minimizing}
{\sc J.~Warga}, {\em Minimizing certain convex functions}, Journal of the
  Society for Industrial and Applied Mathematics, 11 (1963), pp.~588--593.

\bibitem{wright2015coordinate}
{\sc S.~J. Wright}, {\em Coordinate descent algorithms}, Mathematical
  Programming, 151 (2015), pp.~3--34.

\bibitem{xia2011projection}
{\sc F.-Q. Xia and N.-J. Huang}, {\em A projection-proximal point algorithm for
  solving generalized variational inequalities}, Journal of optimization theory
  and applications, 150 (2011), pp.~98--117.

\bibitem{xu2013block}
{\sc Y.~Xu and W.~Yin}, {\em A block coordinate descent method for regularized
  multiconvex optimization with applications to nonnegative tensor
  factorization and completion}, SIAM Journal on imaging sciences, 6 (2013),
  pp.~1758--1789.

\bibitem{xu2014globally}
{\sc Y.~Xu and W.~Yin}, {\em A globally convergent algorithm for nonconvex
  optimization based on block coordinate update}, arXiv preprint
  arXiv:1410.1386. To appear in Journal of Scientific Computing,  (2014).

\bibitem{yuan2010comparison}
{\sc G.-X. Yuan, K.-W. Chang, C.-J. Hsieh, and C.-J. Lin}, {\em A comparison of
  optimization methods and software for large-scale l1-regularized linear
  classification}, Journal of Machine Learning Research, 11 (2010),
  pp.~3183--3234.

\bibitem{newton}
{\sc R.~Zdunek and A.~Cichocki}, {\em Non-negative matrix factorization with
  quasi-newton optimization}, in International Conference on Artificial
  Intelligence and Soft Computing, Springer, 2006, pp.~870--879.

\end{thebibliography}
\end{document}